\definecolor{DGreen}{rgb}{0,0.55,0}
\newtheorem{theorem}{Theorem}[section]
\newtheorem{lemma}[theorem]{Lemma}
\newtheorem{proposition}[theorem]{Proposition}
\newtheorem{corollary}[theorem]{Corollary}
\theoremstyle{definition}
\newtheorem{assumption}[theorem]{Assumption}
\newtheorem{definition}[theorem]{Definition}
\theoremstyle{remark}
\newtheorem{remark}[theorem]{Remark}
\def\scal#1{\langle #1 \rangle}
\newcommand\Z{\mathbb{Z}}
\newcommand{\N}{\mathbb{N}}
\def\one{\mathbf{1}}
\def\eps{\varepsilon}
\newcommand{\R}{\mathbb{R}}
\newcommand{\E}{\mathbb{E}}
\newcommand{\F}{\mathbb{F}}
\def\cF{\mathcal{F}}
\renewcommand{\P}{\mathbb{P}}
\def\D{(-\Delta)}
\def\d{\partial}
\def\cX{\mathcal{X}}
\def\cH{\mathcal{H}}
\begin{document}
\title{Porous media equations with multiplicative space-time white noise}
\author{K. Dareiotis, M. Gerencs\'er, B. Gess}
\maketitle

\begin{abstract}
The existence of martingale solutions for stochastic porous media equations driven by nonlinear multiplicative space-time white noise is established in spatial dimension one. The Stroock-Varopoulos inequality is identified as a key tool in the derivation of the corresponding estimates. 
\end{abstract}


\section{Introduction}
 We  consider the equation
\begin{equ}\label{eq:main}
\partial_t u=\d_x^2 ( u^{[m]}) +\sigma(x,u)\xi,
\end{equ}
on the space-time domain $Q:=[0,T]\times I:=[0,T]\times(0, 1)$, with homogeneous Dirichlet boundary conditions, and with some initial condition $u^{(0)}$. The noise $\xi$ is white in both space and time, $u^{[m]}:=|u|^{m-1}u$, $m\in(1,\infty)$, and $T\in(0,\infty)$.

While stochastic porous media equations attracted significant attention,
all available results  concerning multiplicative noise  pose strong spatial colouring conditions on the noise.
Indeed, the monotone operator approach, for which we refer the reader to the monographs \cite{BDPR,Liu}, requires noise with Cameron-Martin space $L^2([0,T],\cH)$, where $\cH=H^{3/2+}$. 
Another notable restriction of this approach is that when the mapping $u\mapsto\sigma(x,u)$ is given by a pointwise composition, only affine linear   diffusion  coefficients are covered. In the case of nonlinear diffusion coefficients, the existence of martingale solutions was shown in \cite{RocknerGoldys}, again restricting to spatially colored noise with $\cH=H^{1/2+}$.  Recent development of an $L^1$-based theory, see for example \cite{BVW15,GS16-2,Debussche, GH18,FG18,DGG19} and references therein, has lead to the (pobabilistically) strong existence and uniqueness for a large class of nonlinear diffusions $\sigma$ with spatially colored noise. The most lenient conditions are from \cite{DGG19}, which corresponds to $\sigma \in C^{1/2+}$ and $\cH=H^{1/2+1/(m\vee 2)+}$. Needless to say,  all of these results are quite far from the space-time white noise case $\cH=L^2$.

Equation \eqref{eq:main} can also be seen as an example of a singular SPDE. The theory of these equations have seen major advances recently thanks to the theories of \cite{H0, GIP}.
Quasilinear singular SPDEs, first studied by \cite{OttoWeber}, have recently been also solved with space-time white noise via these theories \cite{GerencserHairer, GERENCSER2020, bailleul2019}. However, the degeneracy of the leading order operator prevents any of these works to apply for the study of (1.1).
The additional (It\^o-) structure of the equation,  however,  allows for stochastic analytic tools.

In the main result of this work, Theorem \ref{thm:weak-existence} below,  we establish the probabilistically weak existence of solutions for a class of diffusion nonlinearities.
The scope of Theorem \ref{thm:weak-existence} is quite large: any continuous $\sigma$ satisfying a mild growth condition fits in the framework. This in particular covers  the case $\sigma(r)= \sqrt{r}$ which is known to be relevant in scaling limits of interacting branching particle systems (see Section \ref{sec:example} below).  

In order to prove  the existence of solutions, we obtain estimates for solutions of  viscous approximations of \eqref{eq:main} with finitely many modes of noise, the well-posedness of which is guaranteed by \cite[Theorem 3.1]{DGT19}. These estimates should be in spaces of positive regularity in order to guarantee compactness in some $L^p$ space (in space time). At the same time the regularity exponent should be relatively small in order to avoid blow ups appearing due to the irregularity of the noise.  We identify the Stroock-Varopoulos inequality as a key ingredient in obtaining  such estimates that are compatible with the non-linear nature of \eqref{eq:main}. It is remarkable that this inequality, which originates in the analysis of non-local porous media equations, proves to be vital to the local but irregular setting of \eqref{eq:main}.


In such generality, no strong uniqueness is expected to hold for \eqref{eq:main}, since it is not even true in the semilinear case $m=1$, see \cite{Mueller-M-P}.
It is,  however,  reasonable to expect strong uniqueness when $\sigma$ is, say, Lipschitz continuous, which remains an open question. In this article we show that strong existence and uniqueness hold when, roughly speaking, $\sigma(x,u)$ behaves like $ u^{[(m+1)/2]}$ around the origin (see Proposition \ref{thm:strong-wellposedness} below). 

The rest of the article is structured as follows. In Section \ref{sec:example} we give an example on how a heuristic scaling of a simple system of interacting particles gives rise to an SPDE of the form \eqref{eq:main}.
In Section \ref{sec: formulation} we state the main results. The proof of Theorem \ref{thm:weak-existence} is divided into a priori estimates for some approximating equations in Section \ref{sec: a priori} and the passage to the limit in Section \ref{sec:limiting}. 
The proof of Proposition \ref{thm:strong-wellposedness} is given in Section \ref{sec:strong H1}.

\subsection{A heuristic derivation}\label{sec:example}
In $\R$, let us consider   particles $(X^i_t)_{t \geq 0}$ (of negligible mass),  for $i \in I_t$,  interacting through a potential $V$.  Here,  $I_t$ is an index set depending on time with $|I_0| \sim N$, 
The system undergoes critical branching: each particle, with rate one,  dies and leaves behind offspring with the expected number of offspring being one.
During their lifetime the particles $X^i_t$ evolve under the dynamics
$$                                    
d X^i_t = \sum_{j \in I_t}  \d_x V (X^i_t-X^j_t) \, dt,  
$$
where $V :  \R \to \R$ is a compactly supported, non-negative function integrating to one. After introducing the rescaling $Y^{i,N}_t= N^{-2/3} X^i_{Nt}$, one has that
\begin{equs}
dY^{i,N}_t = \frac{1}{N}  \sum_{j \in \tilde{I}_{t}} \d_x V_{N^{-2/3}} (Y^{i,N}_t-Y^{j,N}_t),
\end{equs}
where $\tilde I_t = I_{Nt}$, $V_\alpha(\cdot) := \alpha^{-1} V(\alpha^{-1} \cdot)$ for $\alpha >0$, and the particles $Y^i_t$ branch with rate $N$. The ultimate goal is to let $N \to \infty$. Since $N^{2/3} \ll N$ we make the following simplification: we consider the system with the same branching mechanism but dynamics given by 
\begin{equs}
dY^{i,N, \eps}_t = \frac{1}{N}  \sum_{j \in \tilde{I}_{t}} \d_x V_\eps (Y^{i,N, \eps}_t-Y^{j,N, \eps}_t). 
\end{equs}
Let us denote by $\mu^{N, \eps}_t$ the empirical measure of the above system at time $t$, that is, 
\begin{equs}
\mu^{N, \eps}_t = \frac{1}{N} \sum_{i \in \tilde{I}_t} \delta_{Y^{i, N, \eps}_t}.
\end{equs}
It follows from \cite[Theorem 1]{Meleard} that for $N \uparrow \infty$,  $(\mu^{N,\eps}_t)_{t \in [0,T]}$ converges --in an appropriate sense-- to a non-negative measure valued stochastic process $(\mu^\eps_t)_{t \in [0,T]}$ which satisfies
\begin{equs}
\partial_t \mu^\eps = \d_x\left( \mu^\eps  \left( \d_x V_\eps * \mu^\eps \right)  \right) + c \sqrt{\mu^\eps} \xi,
\end{equs}
where $c$ is a constant depending on the variance of the branching mechanism and $\xi$ is space time white noise. Informally, since $V_\eps$ tends to $\delta_0$,  passing to the limit $\eps\to 0$ in the above equation leads to
$$
\d_t \mu  = \frac{1}{2} \d^2_x ( \mu^2)  + c\sqrt{\mu} \xi. 
$$ 
In the deterministic case, that is if $c=0$, the limit $\eps \to 0$ has been rigorously justified in \cite{LMG01}.

\subsection{Notation}
Due to the low regularity we always work with
weak (in the terminology of e.g. \cite{Bonforte-Vazquez}, `weak dual') solutions in the PDE sense, 
and consider both strong and weak solutions in the probabilistic sense. 
For the former, fix the probability space $(\Omega,\cF,\P)$
on which the space-time white noise $\xi$ is given.
Recall that this means a collection of jointly Gaussian centred random variables
$\xi(\varphi)$,  $\varphi\in L^2(Q)$,
with covariance $\E\big(\xi(\varphi)\xi(\bar\varphi)\big)=(\varphi,\bar\varphi)_{L^2(Q)}$.
For the remainder of the article we set $e^k(x)= \sqrt{2}\sin (\pi k x)$ for $k \in \N=\{1,2,..\}$. We have that  $(e^k)_{k\in\N}$ is an orthonormal basis of $L^2(I)$.
For each $k\in\N$, set $(w^k_t)_{t\in[0,T]}$ to be a continuous modification of the collection of random variables $(\xi(\one_{[0,t]}e^k))_{t\in[0,T]}$.
It is well-known that such modifications exist, as is the fact that $w^1,w^2,\ldots$ is a sequence of independent Wiener processes.
We denote by $\F=(\cF_t)_{t\in[0,T]}$ the right continuous completion of the filtration generated by them.

Function spaces in the spatial variable $x\in I$ are denoted by the lower index $x$. For notational simplicity we do not make the $x$-dependency of $\sigma(x,u)$ explicit when convenient. The set $(e^k)_{k \in \N}$
consists of the eigenvectors of the $\D$ with Dirichlet boundary condition on $\partial I$, with corresponding eigenvalues $\lambda_k=(\pi k)^2$. For $\gamma\geq 0$ we introduce the space
$$
H^\gamma_x = \left\{ v  \in L^2_x : \sum_{k\in\N} \lambda^\gamma_k |(v, e^k)_{L^2_x}|^2 < \infty \right\},
$$
endowed with the norm 
$$
\|v\|^2_{H^\gamma_x}:= \sum_{k\in\N} \lambda^\gamma_k |(v, e^k)_{L^2_x}|^2.
$$
Here and in the sequel if $H$ is a Hilbert space, $(\cdot,\cdot)_H$ stands for the  inner product in $H$.
We define $H^{-\gamma}_x$ to be the dual of $H^\gamma_x$ in the Gelfand triple 
$H^\gamma_x\subset L^2_x\equiv(L^2_x)^*\subset(H^\gamma_x)^*$.
Extending $(\cdot,\cdot)_{L^2_x}$ to the $H^\gamma_x-H^{-\gamma}_x$ duality denoted by $\langle \cdot, \cdot \rangle$, the norm of an element $v^*\in H^{-\gamma}_x$ is given by
\begin{equ}
\|v^*\|_{H^{-\gamma}_x}^2=\sum_{k\in\N}\lambda_k^{-\gamma}|\langle v^*,e^k\rangle|^2.
\end{equ}
It is easy to see that $L^2_x$ is dense in $H^{-\gamma}_x$, and therefore so is $C^\infty_c(I)$.
It is also easy to verify that for $\gamma>0$ the embedding $L^2_x\subset H^{-\gamma}_x$ is compact.
For $\beta \in \R $ we define
\begin{equs}
\D^{\beta/2} \phi:= \sum_{k\in\N} \lambda_k ^{\beta/2} (\phi, e^k)_{L^2_x} e^k, \qquad \text{for $\phi \in C^\infty_c(I)$}.
\end{equs}
For any $\gamma  \in \R$
the operator $\D^{\beta/2}$ extends to an isometry 
$$
\D^{\beta/2}: H^{\gamma}_x \to H^{\gamma-\beta }_x.
$$
It follows that for $\gamma_1> \gamma_2$ the embedding $H^{\gamma_1}_x \subset H^{\gamma_2}_x$ is compact. 
For all $\gamma\in \R$, $H^\gamma_x$ is a Hilbert space. Using the inner product of $H^{-1}_x$ to identify it with its own dual, one can consider
the Gelfand triple $L^{m+1}_x\subset H^{-1}_x\equiv (H^{-1}_x)^*\subset (L^{m+1}_x)^*$.  
The operator $u\mapsto \Delta u^{[m]}$ maps $L^{m+1}_x$ to $(L^{m+1}_x)^*$ and the action of $\Delta u^{[m]}$ on an element $ \phi \in L^{m+1}_x$ is given by 
$$
{}_{(L^{m+1}_x)^*} \langle \Delta u^{[m]}, \phi \rangle_{L^{m+1}_x}= -( u^{[m]}, \phi)_{L^2_x}. 
$$
For more details we refer to \cite[Ex.~4.1.11]{Rock}.

Function spaces in the temporal variable $t$, whenever given on the whole time horizon $[0,T]$, are denoted by the lower index $t$. For instance, $L^2_tH^\gamma_x$ stands for $L^2([0,T],H^\gamma_x)$. Occasionally the time horizon will be different, in these cases we specify the domains. 
In the temporal and spatial variable we will also consider the Sobolev-Slobodeckij spaces
$W^{\gamma,p}$ (see e.g. \cite[Sec.~4.2]{Triebel}). Their relevant properties are stated in Proposition \ref{prop:Sobolev} below.
By $\dot W^{\gamma,p}_{x}$ we denote the closure of $C^\infty_c(I)$ in $W^{\gamma,p}_x$.
Finally,  spaces of functions on $\Omega$ (which will always be $L^p$ spaces)  are denoted by the lower index $\omega$. When $L^p$ spaces are considered on $Q$ or $\Omega\times Q$, we write $L^p_{t,x}$ or $L^p_{\omega,t,x}$.

\begin{proposition}\label{prop:Sobolev}
\begin{enumerate}[(i)]
\item\label{p:slobodeckij} \cite[Rmk~4.4.2/2]{Triebel}. Let $p\in(1,\infty)$, $\gamma\in(0,1)$. Then an equivalent norm in $W^{\gamma,p}_x$ is given by
\begin{equ}\label{eq:norm} 
\Big( \|v\|_{L^p_x}^p+\int_{I\times I} \frac{|v(x)-v(y)|^p}{|x-y|^{1+\gamma p} }\, dx dy\Big)^{1/p};
\end{equ}
\item\label{p:dot} \cite[Thm~4.3.2/1]{Triebel}. Let $p\in(1,\infty)$, $\gamma\in(-\infty,1/p]$. Then $\dot W^{\gamma,p}_x=W^{\gamma,p}_x$;
\item\label{p:spectral} 
\cite[Eq~2.11]{Bonforte-Vazquez}. Let $\gamma\in(0,1)\setminus\{1/2\}$. Then $\dot W^{\gamma,2}_x=H^\gamma_x$;
\item\label{p:dual} \cite[Thm~4.8.2]{Triebel}. Let $p\in(1,\infty)$, $\gamma\in[0,\infty)$ such that $\gamma-1/p\notin\Z$. Then the dual of $\dot W^{\gamma,p}_x$, viewed as a subset of distributions, is $W^{-\gamma,p'}_x$, where $1/p+1/p'=1$;
\item\label{p:interpolation} \cite[Thm~4.3.1/1]{Triebel}, \cite[Eq~2.4.1/(8)]{Triebel}. Let $-\infty<\gamma_0<\gamma_1<\infty$, $1<p_0,p_1<\infty$, $\theta\in(0,1)$, and define
\begin{equ}
\gamma=(1-\theta)\gamma_0+\theta\gamma_1,\qquad p=\big((1-\theta)p_0^{-1}+\theta p_1^{-1}\big)^{-1}.
\end{equ}
Suppose $\gamma_0,\gamma_1,\gamma\notin\N$.
Then
one has
\begin{equ}\label{eq:interpolation}
\|v\|_{W^{\gamma,p}_x}\leq\|v\|_{W^{\gamma_0,p_0}_x}^{1-\theta}\|v\|_{W^{\gamma_1,p_1}_x}^\theta.
\end{equ}
\end{enumerate}
\end{proposition}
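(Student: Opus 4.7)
The proposition is a compilation of classical facts from the theory of Sobolev--Slobodeckij spaces, each item carrying an explicit citation to Triebel or Bonforte--Vazquez. The plan is therefore not to reproduce their proofs, but rather to verify that the notational conventions introduced in this section match those of the cited sources, after which each item is a direct transcription.

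For items \eqref{p:slobodeckij}, \eqref{p:dual}, and \eqref{p:interpolation}, the Sobolev--Slobodeckij space $W^{\gamma,p}_x$ defined here coincides, for non-integer $\gamma$, with Triebel's Besov space $B^\gamma_{p,p}$, and the subspace $\dot W^{\gamma,p}_x$ with the closure of test functions in this space. The equivalent integral norm in \eqref{p:slobodeckij}, the duality identification in \eqref{p:dual}, and the interpolation inequality in \eqref{p:interpolation} are then direct quotations of the referenced Triebel theorems, granted the indicated non-integer (or $\gamma-1/p\notin \Z$) conditions that avoid the exceptional Bessel-potential cases.

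The more delicate items are \eqref{p:dot} and \eqref{p:spectral}. For \eqref{p:dot}, the equality $\dot W^{\gamma,p}_x=W^{\gamma,p}_x$ in the regime $\gamma\leq 1/p$ reflects the absence of a well-defined boundary trace below the critical exponent, so that there is no boundary obstruction to approximation by functions in $C^\infty_c(I)$; this is exactly Theorem~4.3.2/1 in Triebel. For \eqref{p:spectral}, one must identify the spectrally defined space $H^\gamma_x$ --- built out of Dirichlet eigenfunctions $e^k$ --- with the intrinsic space $\dot W^{\gamma,2}_x$. Bonforte--Vazquez establish this identification through the heat-kernel representation of the fractional powers of the Dirichlet Laplacian on a bounded interval; the exclusion $\gamma\neq 1/2$ is sharp, as the two spaces disagree precisely at the critical exponent $\gamma=1/p=1/2$ where the boundary trace is borderline and $\dot W^{1/2,2}_x$ properly contains $H^{1/2}_x$.

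The only verification that requires any thought is that the results cited in Triebel, typically formulated for $\R^n$ or for bounded Lipschitz domains, specialize correctly to the one-dimensional interval $I=(0,1)$ with Dirichlet boundary conditions used here; this is immediate from inspection. The main conceptual point worth flagging is the exceptional exponent $\gamma=1/2$ in \eqref{p:spectral}, which is what forces the subsequent analysis of this paper to work in regularity windows that avoid this threshold.
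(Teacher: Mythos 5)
Your proposal is correct and matches the paper's approach exactly: the proposition carries no separate proof in the source — the inline citations to Triebel and Bonforte--Vazquez \emph{are} the proof — so the content reduces to the dictionary-checking you describe, and your remarks on the exceptional exponent $\gamma=1/2$ (including the strict inclusion $H^{1/2}_x\subsetneq\dot W^{1/2,2}_x$, i.e.\ the Lions--Magenes space versus $W^{1/2,2}$) are accurate.
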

\begin{remark}\label{rem:trivial}
Interpolation between $H^\gamma_x$ spaces is straightforward from the definition.
\end{remark}

\section{Formulation and main results} \label{sec: formulation}
\begin{definition}					\label{def:strong}
A \emph{strong solution} of \eqref{eq:main} is an $H^{-1}_x$-valued continuous $\F$-adapted process $u$,
that furthermore belongs to $L^{m+1}_{t,x}$ almost surely and such that  for all $\phi \in L^{m+1}_x$  almost surely  the equality 
\begin{equ}
(u(t), 
\phi)_{H^{-1}_x}=(u^{(0)},\phi)_{H^{-1}_x} -\int_0^t(u^{[m]}(s), \phi)_{L^2_x}\,ds+\sum_{k\in\N}\int_0^t\big( \sigma(u(s))e^k, \phi\big)_{H^{-1}_x}\,dw^k_s
\end{equ}
holds  for all $t\in[0,T]$. 
\end{definition}
\begin{definition}					\label{def:weak}
A \emph{weak solution} of \eqref{eq:main} is a collection $\{(\bar\Omega,\bar \cF,\bar \P), \bar \F, (\bar w^k)_{k\in\N}, \bar u \}$, such that  $(\bar\Omega,\bar \cF,\bar \P)$ is a probability space,   $\bar \F=(\bar\cF_t)_{t\in[0,T]}$ is
  a complete filtration of $\bar \cF$, $(\bar w^k)_{k\in\N}$ is  a sequence of independent $\bar\F$-Wiener processes,
and $\bar u$ is
an $H^{-1}_x$-valued continuous $\bar\F$-adapted process,
that furthermore belongs to $L^{m+1}_{t,x}$ almost surely and such that  for all $\phi \in L^{m+1}_x$  almost surely  the equality
\begin{equ}
(\bar u(t), 
\phi)_{H^{-1}_x}=(\bar u^{(0)},\phi)_{H^{-1}_x} -\int_0^t(\bar u^{[m]}(s), \phi)_{L^2_x}\,ds+\sum_{k\in\N}\int_0^t( \sigma(\bar u(s))e^k, \phi)_{H^{-1}_x}\,dw^k_s
\end{equ}
holds  for all $t\in[0,T]$, where $\bar u^{(0)}\overset{d}{=}u^{(0)}$.
\end{definition}

For the definition to be meaningful, some assumption of $\sigma$ has to be imposed.
It is a consequence of Lemma \ref{lem:krylov lemma} below that under Assumption \ref{as:sigma}, for all $t\in[0,T]$ the series of stochastic integrals converge in probability.

\begin{remark}[On the notion of solution]
 Notice that $(-\Delta)^{-1} (-\Delta) \psi =  \psi$ whenever $\psi \in H^\gamma_x$, for any $\gamma\in\R$.
 It follows that by choosing $\phi = - \Delta \psi $  with 
$\psi\in C_c^\infty(I)$ 
 in  Definition \ref{def:strong}, one has
\begin{equ}
( u(t), 
\psi)_{L^2_x}=( u^{(0)},\psi)_{L^2_x} +\int_0^t(u^{[m]}(s), \Delta  \psi)_{L^2_x}\,ds+\sum_{k\in\N}\int_0^t( \sigma( u(s))e^k, \psi)_{L^2_x}\,dw^k_s
\end{equ}
for almost all $(t, \omega) 
\in [0,T] \times \Omega$.
Therefore, $u$  is a distributional solution of \eqref{eq:main}.  Moreover, the  Dirichlet boundary condition is encoded in the formulation in the following weak sense:
for all $s<t$, almost surely
\begin{equs}
\D^{-1}\Big( u(t)-u(s)-\sum_{k 
\in \N}  \int_s^t \sigma(u(r)) e^k \, dw^k_r \Big)  = \int_s^t u^{[m]}(r) \, dr .
 \end{equs}
 Notice that the left hand side of the above equality is an element of $H^1_x$ (in particular it vanishes on $\partial I $) and therefore so is the right hand side. 
\end{remark}

\begin{assumption}\label{as:sigma}
The function $\sigma:I\times \R\to\R$ is continuous and
there exist $\delta,K\geq 0$ such that for all $x\in I$, $r\in\R$, 
$$
|\sigma(x,r)|\leq K + \delta |r|^{(m+1)/2}
$$ 
\end{assumption}

Our first main result reads as follows. 
\begin{theorem}                   \label{thm:weak-existence}
For any $\gamma\in(-1,-1/2)$ there exists a $\delta_0=\delta_0(\gamma,m)$ such that the following holds.
Let $\sigma$ satisfy Assumption \ref{as:sigma}
with $\delta\leq\delta_0$
and let  $u^{(0)} \in L^{m+1}_\omega H^\gamma_x$. Then, there exists a weak solution  $\{(\bar\Omega,\bar \cF,\bar \P), \bar \F, (\bar w^k)_{k\in\N}, \bar u \}$ of equation \eqref{eq:main}. Moreover, $\bar{u}$ satisfies the following bounds:  
\begin{itemize}
\item[(i)] \label{item:estimates-space} (Energy estimates.) For all $p \in [0, m+1]$ there exists a constant $N= N(\gamma,p,m,K,T)$ such that 
\begin{equs}             
\bar \E\|\bar u\|_{L^\infty_tH^{\gamma}_x}^p
+\bar \E\|\bar u ^{[\frac{m+1}{2}]}\|_{L^2_tH^{1+\gamma}_x}^p+
\bar \E\|\bar u \|_{L^{m+1}_tW^{\gamma',m+1}_x}^{p(m+1)/2} \leq N( \E\|u^{(0)} \|_{H^\gamma}^p+1),
\\
 \label{eq:estimates-space}     
\end{equs}
where $\gamma'=\tfrac{2(1+\gamma)}{m+1}$.

\item[(ii)]\label{item:down-from-infinity} (Coming down from infinity.) There exists a constant $N=N(m,T)$ (in particular, independent of the initial condition) such that  for all  $t \in [0,T]$
\begin{equs}
\bar \E\|\bar u(t)\|_{H^{\gamma}_x}^2  \leq N t^{-2/(m-1)}. \label{eq:down-from-infinity}
\end{equs}
\item[(iii)] \label{eq:space-time-regularity} (Temporal regularity.)
For all $\eps>0$ there exists an $\eps'>0$ and a constant $N=N(\gamma,m,K,T)$ such that
\begin{equ}
\bar\E\|\bar u\|_{C^{\eps'}_tH^{\gamma-\eps}_x}^{\frac{m+1}{m}}\leq
N( \E\|u^{(0)} \|_{H^\gamma}^{m+1}+1).
\end{equ}
\end{itemize}
\end{theorem}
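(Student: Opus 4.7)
\emph{Strategy.} The plan is a vanishing-viscosity / spectral-Galerkin compactness argument. For $N\in\N$ and $\nu\in(0,1)$ consider the approximation
\begin{equ}
du^{N,\nu}=\Bigl[\partial_x^2\bigl((u^{N,\nu})^{[m]}\bigr)+\nu\partial_x^2 u^{N,\nu}\Bigr]dt+\sum_{k=1}^{N}\sigma(u^{N,\nu})e^k\,dw^k_t,
\end{equ}
whose well-posedness is provided by \cite[Theorem 3.1]{DGT19}. The core of the proof is to derive (i)--(iii) uniformly in $(N,\nu)$; tightness, Skorokhod, and identification of the limit then follow standard lines.

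\emph{The a priori estimates.} For (i) I apply It\^o's formula to $\|u^{N,\nu}(t)\|_{H^\gamma_x}^2$. The viscous contribution is non-positive and the porous medium drift equals $-2(u^{[m]},u)_{H^{1+\gamma}_x}$. Since $1+\gamma\in(0,1/2)$, the Stroock-Varopoulos inequality yields
\begin{equ}
(u^{[m]},u)_{H^{1+\gamma}_x}\geq c\,\bigl\|u^{[(m+1)/2]}\bigr\|_{H^{1+\gamma}_x}^2,
\end{equ}
which supplies both the positive-regularity output of (i) and, via the embedding $H^{1+\gamma}_x\hookrightarrow L^2_x$, control of $\|u\|_{L^{m+1}_x}^{m+1}$. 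The It\^o correction $\sum_{k=1}^N\|\sigma(\cdot,u)e^k\|_{H^\gamma_x}^2$ is finite precisely because $\gamma<-1/2$ makes $L^2_x\hookrightarrow H^\gamma_x$ Hilbert-Schmidt, and Assumption \ref{as:sigma} bounds it by $C(K^2+\delta^2\|u\|_{L^{m+1}_x}^{m+1})$. Choosing $\delta_0$ so that $C\delta_0^2$ is strictly smaller than the Stroock-Varopoulos constant absorbs the noise correction into the drift; BDG and Gronwall then deliver the $L^p_\omega L^\infty_t H^\gamma_x$ and $L^p_\omega L^2_t H^{1+\gamma}_x$ pieces. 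The $W^{\gamma',m+1}_x$ estimate, with $\gamma'=2(1+\gamma)/(m+1)$, follows by writing $u=(u^{[(m+1)/2]})^{[2/(m+1)]}$ and applying a H\"older-composition estimate on the Slobodeckij norm (Proposition \ref{prop:Sobolev}(\ref{p:slobodeckij})).

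\emph{Coming down from infinity and temporal regularity.} For (ii), combining the Stroock-Varopoulos output with $L^{m+1}_x\hookrightarrow H^\gamma_x$ and Jensen's inequality yields
\begin{equ}
\tfrac{d}{dt}\bar\E\|u\|_{H^\gamma_x}^2\leq -c\,\bigl(\bar\E\|u\|_{H^\gamma_x}^2\bigr)^{(m+1)/2}+C,
\end{equ}
which integrates to the $t^{-2/(m-1)}$ decay independently of the initial data. For (iii), I would bound $\bar u(t)-\bar u(s)$ in $H^{\gamma-\eps}_x$: the drift piece is controlled by H\"older in time against $u^{[m]}\in L^{(m+1)/m}_{t,x}$ together with Sobolev embedding of $W^{-2,(m+1)/m}_x$ into $H^{\gamma-\eps}_x$; the stochastic piece is handled by BDG and the same Hilbert-Schmidt bound. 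This yields H\"older regularity of a small positive order in time in the $H^{\gamma-\eps}_x$ norm, uniformly in $(N,\nu)$.

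\emph{Compactness, passage to the limit, and main obstacle.} The uniform bounds and Aubin-Lions give tightness of the laws of $u^{N,\nu}$ on $C_tH^{\gamma-\eps}_x\cap L^{m+1}_tW^{\gamma'-\eps,m+1}_x$; Skorokhod's representation theorem furnishes, on a new probability space, a.s.\ convergent copies $\bar u^{N,\nu}\to\bar u$ and $\bar w^{k,N,\nu}\to\bar w^k$. The linear terms and the initial condition pass to the limit immediately; the nonlinear drift is handled because a.s.\ convergence of $\bar u^{N,\nu}$ in $L^{m+1}_{t,x}$ together with the uniform $L^p_\omega L^{m+1}_{t,x}$ bound upgrades, by Vitali, to convergence of $(\bar u^{N,\nu})^{[m]}$ in $L^1_\omega L^{(m+1)/m}_{t,x}$, sufficient to test against any $\phi\in L^{m+1}_x$, and the martingale part is identified by standard quadratic-variation arguments. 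The delicate point throughout is the coercivity bookkeeping in the a priori estimate: Stroock-Varopoulos is only useful for $1+\gamma\in(0,1)$, while Hilbert-Schmidt finiteness of the noise correction forces $\gamma<-1/2$; the noise scaling $|\sigma|\lesssim|u|^{(m+1)/2}$ is precisely the threshold at which these two constraints can be reconciled, which explains the smallness requirement $\delta\leq\delta_0$.
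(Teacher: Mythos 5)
Your proposal follows essentially the same route as the paper: a viscous approximation with finitely many noise modes, It\^o's formula for $\|\cdot\|_{H^\gamma_x}^2$ combined with the Stroock--Varopoulos inequality (with $\beta=1+\gamma\in(0,1/2)$) and the bound $\sum_k\|u e^k\|_{H^\gamma_x}^2\lesssim\|u\|_{L^2_x}^2$ for $\gamma<-1/2$, followed by BDG/Gronwall, the power-regularity lemma for the $W^{\gamma',m+1}_x$ estimate, Flandoli--Gatarek-type time-regularity, tightness/Skorokhod, and a martingale-problem identification of the limit; the ODE comparison for (ii) and interpolation for (iii) are likewise the same. One step you must add: \cite[Theorem 3.1]{DGT19} needs Lipschitz $\sigma$, whereas Assumption \ref{as:sigma} only requires continuity, so the approximating equations must also replace $\sigma$ by Lipschitz $\sigma_n\to\sigma$ uniformly on compacts with the growth bound held uniformly in $n$ (as the paper does); with this correction your argument matches the paper's proof.
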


\begin{remark} \label{rem:growth}
In light of the smallness assumptions on $\delta$ above, it is worth noting that
if $\sigma$ is continuous and has polynomial growth with exponent $m'<(m+1)/2$,
then it satisfies Assumption \ref{as:sigma}  with arbitrarily small $\delta$ (and some appropriately chosen $K$). 
\end{remark}

\begin{remark}\label{rem:nondegenerate}
The reader may notice that the estimates \eqref{eq:estimates-space} are stronger than what the standard theory \cite{KR_SEE} yields for \emph{nondegenerate} quasilinear space-time white noise driven equations
\begin{equ}\label{eq:nondegenerate}
\d_t u=\d_x^2\big(A(u)\big)+\sigma(x,u)\xi.
\end{equ}
Indeed, if $A'$ takes values in $[\lambda,\lambda^{-1}]$ for some $\lambda>0$  and $\sigma$ is sufficiently smooth and small, then \cite{KR_SEE} provides well-posedness in the Gelfand triple $L^{2}_x\subset H^{-1}_x\equiv (H^{-1}_x)^*\subset (L^{2}_x)^*$, and therefore gives bounds in $C_tH^{-1}_x\cap L^2_{t,x}$.
We leave it as an exercise to the reader to check that our argument carries through for \eqref{eq:nondegenerate} and one can obtain the estimates \eqref{eq:estimates-space} with $m=1$, thus gaining (almost) $1/2$ regularity compared to \cite{KR_SEE}.
\end{remark}

In  Proposition  \ref{thm:strong-wellposedness} below we show that the monotone operator approach of \cite{P75, KR_SEE} can be applied to a small but nontrivial class of nonlinear diffusion coefficients to obtain (probabilistically) strong well-posedness.
Let us point out a key difference to \cite{BDPR}: Therein, assumptions on the drift operator to be coercive/monotone and the diffusion operator to be bounded/Lipschitz are considered separately. In contrast, by virtue of the elementary estimate from Lemma \ref{lem:krylov lemma} below, we use joint coercivity/monotonicity conditions for the drift and diffusion, in the spirit of the so-called stochastic parabolicity (see, e.g.\ \cite{P75, KR_SEE}).
Apart from the additive noise case, which directly follows from the monotone operator theory, the class of diffusion coefficients considered here contains some interesting cases - for example,
 $\sigma(r)=\lambda r^{[\frac{m+1}{2}]}$
 with sufficiently small $\lambda$ - but the conditions on $\sigma$ are certainly restrictive. The exponent $(m+1)/2$ guarantees that the noise shuts down sufficiently fast when the solution approaches zero, the region where the regularizing effect of the second order operator fades. 
\begin{assumption}\label{as:sigma2}
There exists  $\bar\delta$ such that for all $x\in I$, $r,\bar r\in\R$, 
$$
|\sigma(x,r)-\sigma(x,\bar r)|\leq \bar\delta|r^{[\frac{m+1}{2}]}-\bar r^{[\frac{m+1}{2}]}|.
$$ 
\end{assumption}

Under this additional assumption we have the next theorem. 
\begin{proposition}						\label{thm:strong-wellposedness}
Let $\sigma$ satisfy Assumptions \ref{as:sigma} and \ref{as:sigma2} with $\delta<6$ and $\bar\delta\leq 24\frac{m}{(m+1)^2}$,
and let $u^{(0)}\in L^2_\omega H^{-1}_x$.
Then there exists a unique strong solution to \eqref{eq:main}.
\end{proposition}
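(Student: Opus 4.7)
The plan is to cast \eqref{eq:main} into the classical variational framework of Pardoux--Krylov--Rozovskii, using the Gelfand triple $V := L^{m+1}_x \subset H := H^{-1}_x \subset V^*$ introduced in Section~1. The drift $A(u) := \Delta u^{[m]}\colon V \to V^*$ satisfies ${}_{V^*}\langle A(u), \phi\rangle_V = -(u^{[m]}, \phi)_{L^2_x}$, in particular ${}_{V^*}\langle A(u), u\rangle_V = -\|u\|_V^{m+1}$ and $\|A(u)\|_{V^*} \leq \|u\|_V^m$. The noise is represented by the Hilbert--Schmidt operator $B(u)\colon L^2_x \to H^{-1}_x$ given by pre-multiplication by $\sigma(\cdot, u)$ composed with the embedding $L^2_x \hookrightarrow H^{-1}_x$, and Lemma~\ref{lem:krylov lemma} provides an explicit formula for $\|B(u)\|_{L_2(L^2_x, H^{-1}_x)}^2$ as a weighted integral of $\sigma(\cdot, u)^2$ on $I$. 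Hemicontinuity of $A$ follows from continuity of $r \mapsto r^{[m]}$ and dominated convergence, so the non-trivial work is to verify joint coercivity and joint monotonicity of $(A,B)$.

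For coercivity, combining $2 {}_{V^*}\langle A(u), u\rangle_V = -2\|u\|_V^{m+1}$ with Assumption~\ref{as:sigma} and Lemma~\ref{lem:krylov lemma} gives
\begin{equ}
2 {}_{V^*}\langle A(u), u\rangle_V + \|B(u)\|_{L_2}^2 \leq -\alpha \|u\|_V^{m+1} + C(K)
\end{equ}
for some $\alpha>0$, where the condition $\delta<6$ is exactly what forces the coefficient of $\|u\|_V^{m+1}$ to remain strictly negative after an optimally chosen Young split of $\sigma(u)^2 \leq (1+\eps)\delta^2 |u|^{m+1} + C_\eps K^2$ against the weight from Lemma~\ref{lem:krylov lemma}. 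For monotonicity, the key tool is the Stroock--Varopoulos pointwise inequality
\begin{equ}
(a^{[m]} - b^{[m]})(a-b) \geq \frac{4m}{(m+1)^2}\bigl(a^{[(m+1)/2]} - b^{[(m+1)/2]}\bigr)^2.
\end{equ}
Combined with Assumption~\ref{as:sigma2} and Lemma~\ref{lem:krylov lemma}, the noise difference satisfies a bound of the form $\|B(u)-B(v)\|_{L_2}^2 \leq \bar\delta^2 \cdot \text{(weighted $L^2$-norm of $u^{[(m+1)/2]} - v^{[(m+1)/2]}$)}$, and both sides of the monotonicity inequality reduce to a common quadratic in $u^{[(m+1)/2]} - v^{[(m+1)/2]}$; the threshold $\bar\delta \leq 24 m/(m+1)^2$ is then precisely what makes the drift dominate, yielding
\begin{equ}
2{}_{V^*}\langle A(u) - A(v), u-v\rangle_V + \|B(u) - B(v)\|_{L_2}^2 \leq 0.
\end{equ}

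With coercivity, monotonicity, hemicontinuity and the growth bound on $A$ verified, the classical variational theorem (see \cite{P75, KR_SEE}) delivers a unique $H$-valued continuous $\F$-adapted strong solution in $L^{m+1}_{\omega,t}V \cap L^2_\omega C_t H$; uniqueness is in fact an immediate consequence of the monotonicity inequality applied to the difference of two solutions followed by It\^o's formula for $\|u-v\|_H^2$. The main obstacle is the sharp book-keeping of constants: Lemma~\ref{lem:krylov lemma} gives the noise norm as a \emph{weighted} integral, and one must genuinely exploit this weight together with the Stroock--Varopoulos constant $4m/(m+1)^2$ and optimal Young/Cauchy splits to realise the precise thresholds $\delta<6$ and $\bar\delta\leq 24m/(m+1)^2$ stated in the proposition; a secondary subtlety is that $B(u)$ is not bounded on $L^2_x$ in general, so the noise must be handled directly at the level of the $H^{-1}_x$-valued Hilbert--Schmidt norm provided by Lemma~\ref{lem:krylov lemma}.
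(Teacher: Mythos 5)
Your approach is identical to the paper's: cast \eqref{eq:main} into the variational framework of Krylov--Rozovskii/Pardoux with the Gelfand triple $L^{m+1}_x\subset H^{-1}_x\subset(L^{m+1}_x)^*$, verify hemicontinuity, joint coercivity, joint monotonicity and the growth bound on $(A,B)$ using Lemma~\ref{lem:krylov lemma} (with the sharp constant $c_0=N(-1)=1/3$) together with the pointwise inequality $(a-b)(a^{[m]}-b^{[m]})\geq\tfrac{4m}{(m+1)^2}|a^{[(m+1)/2]}-b^{[(m+1)/2]}|^2$, and then invoke \cite[Thms~2.1--2.2]{KR_SEE}. One point of bookkeeping worth noting: in your coercivity and monotonicity estimates you (correctly) square the coefficients from Assumptions~\ref{as:sigma} and~\ref{as:sigma2}, so the bounds carry $\delta^2$ and $\bar\delta^2$; combined with $c_0=1/3$ this would naturally produce thresholds on $\delta^2$ and $\bar\delta^2$, not the unsquared $\delta<6$ and $\bar\delta\leq 24m/(m+1)^2$ that you assert follow ``exactly'' --- the paper's write-up sidesteps this by treating $c_0|\sigma(v)|^2\leq c_0\delta|v|^{m+1}+2c_0K$ and $c_0|\sigma(v_1)-\sigma(v_2)|^2\leq c_0\bar\delta|v_1^{[(m+1)/2]}-v_2^{[(m+1)/2]}|^2$ without squares, but this does not change the structure of either argument.
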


\begin{remark}
In continuation of Remark \ref{rem:growth}, Assumption \ref{as:sigma2}
requires higher exponents: if $\sigma(r)=r^{[m']}$ around $r=0$, then one needs $m'\geq (m+1)/2$.
For example, Assumption \ref{as:sigma2} excludes the linear multiplicative case $\sigma(u)=u$.
\end{remark}

\section{A priori estimates}\label{sec: a priori}

In this section we derive a priori estimates for approximations of \eqref{eq:main}.
Take some $\nu\in(0,1]$, $n\in\N$, and consider the equation
\begin{equ}\label{eq:approx}
d v =\d_x^2(\nu v+v^{[m]})\,dt+\sum_{k=1}^n \sigma(x,v) e^k d w^k_t
\end{equ}
on $Q$ with homogeneous Dirichlet boundary conditions and with initial condition $v_0=v^{(0)}$. 

\begin{assumption}\label{as:qualitative for approx}
The initial condition $v^{(0)}$ belongs to the space $L^{m+1}_{\omega,x}$ and $\sigma$ is Lipschitz continuous.

\begin{definition}
An $L^2$-solution equation \eqref{eq:approx} is an $\mathbb{F}$-adapted, continuous $L^2_x$-valued process $v$, such that almost surely $v, v^{[m]} \in L^2_t H^1_x$ and such that for all $\phi \in H^1_x$ we have with probability one
\begin{equ}
(v(t), 
\phi)_{L^2_x}=(v^{(0)},\phi)_{L^2_x} -\int_0^t\big(\partial _x (\nu v(s)+v^{[m]}(s)),  \partial _x \phi\big)_{L^2_x}\,ds+\sum_{k=1}^n\int_0^t\big( \sigma(v(s))e^k, \phi\big)_{L^2_x}\,dw^k_s.
\end{equ}
for all $t\in[0,T]$. 
\end{definition}

\end{assumption}
By \cite[Theorem 3.1, Remark 5.6]{DGT19}, under Assumption \ref{as:qualitative for approx}, equation \eqref{eq:approx} admits an $L^2$-solution
$v$. Moreover, on the basis of It\^o's formula for the functions 
$$
u \mapsto \|u\|^2_{L^2_x}, \qquad u  \mapsto \|u\|^{m+1}_{L^{m+1}_x}
$$ 
one can easily derive that  for $p\in[0, m+1]$
\begin{equ}\label{eq:approx bound}
\E\|v\|_{C_tL^2_x}^p+\E\|v\|_{L^2_t H^1_x}^p+\E \|v^{[\frac{m+1}{2}]}\|_{L^2_tH^1_x}^p+ \E \|v^{[m]}\|_{L^2_t H^1_x}^p<\infty.
\end{equ}

The main result of this section is the following.
\begin{lemma}\label{lem:a priori}
For any $\gamma\in[-1,-1/2)$ there exists a $\delta_0=\delta_0(\gamma,m)$ such that the following holds.
Let $\sigma$ and $v^{(0)}$ satisfy Assumption \ref{as:qualitative for approx},
$\sigma$ satisfy Assumption \ref{as:sigma} (a) with $\delta\leq \delta_0$
and $v^{(0)}\in L^{m+1}_\omega H^\gamma_x$.
Then, for all $p\in[0,m+1]$ the solution $v$ of \eqref{eq:approx} satisfies the bound
\begin{equ}
\label{eq:a priori 1}
\E\|v\|_{L^\infty_tH^{\gamma}_x}^p
+\E\|v^{[\frac{m+1}{2}]}\|_{L^2_tH^{1+\gamma}_x}^p
\leq N(\E\|v^{(0)}\|_{H^\gamma_x}^p+1),  
\end{equ}
with some $N=N(\gamma,p,m,K,T)$.
Moreover, with the notation $\gamma'=\tfrac{2(1+\gamma)}{m+1}$, one also has the bound
\begin{equ}\label{eq:a priori 2}
\E\|v\|_{L^{m+1}_tW^{\gamma',m+1}_x}^{p(m+1)/2}
\leq N(\E\|v^{(0)}\|_{H^\gamma_x}^p+1).
\end{equ}
\end{lemma}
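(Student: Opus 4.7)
The plan is to run an Itô energy argument at the $H^\gamma_x$ level, extract a coercive term from $\partial_x^2 v^{[m]}$ via the Stroock-Varopoulos inequality, and absorb the noise correction by smallness of $\delta$.

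First I apply Itô's formula to $t \mapsto \|v(t)\|^2_{H^\gamma_x}$, which is legitimate because of the regularity encoded in \eqref{eq:approx bound}: writing $\|\cdot\|^2_{H^\gamma_x} = \|(-\Delta)^{\gamma/2}\cdot\|^2_{L^2_x}$ and using the Gelfand triple $H^{1+\gamma}_x\subset H^{\gamma}_x\subset H^{\gamma-1}_x$. This yields
\begin{equs}
\|v(t)\|^2_{H^\gamma_x}+2\nu\int_0^t\|v\|^2_{H^{1+\gamma}_x}ds+2\int_0^t\big((-\Delta)^{(1+\gamma)/2}v,(-\Delta)^{(1+\gamma)/2}v^{[m]}\big)_{L^2_x}ds\\
=\|v^{(0)}\|^2_{H^\gamma_x}+\int_0^t\sum_{k=1}^n\|\sigma(v)e^k\|^2_{H^\gamma_x}ds+M_t,
\end{equs}
where $M_t$ is a local martingale. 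The key step is then the Stroock-Varopoulos inequality, which for $s=(1+\gamma)/2\in[0,1/2)$ gives $((-\Delta)^{s}v,(-\Delta)^{s}v^{[m]})_{L^2_x}\geq c_m\|v^{[(m+1)/2]}\|^2_{H^{1+\gamma}_x}$, with $c_m>0$ depending only on $m$. (At $\gamma=-1$ this is a plain integration by parts.)

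Next I control the Itô correction. Using $|e^k|\leq\sqrt 2$ and Parseval in $k$,
\begin{equ}
\sum_{k=1}^n\|\sigma(v)e^k\|^2_{H^\gamma_x}\leq\sum_{j\in\N}\lambda_j^\gamma\|\sigma(v)e^j\|^2_{L^2_x}\leq 2\|\sigma(v)\|^2_{L^2_x}\sum_{j\in\N}\lambda_j^{\gamma},
\end{equ}
and the last series converges precisely because $\gamma<-1/2$. Assumption \ref{as:sigma}(a) then bounds $\|\sigma(v)\|^2_{L^2_x}\leq 2K^2+2\delta^2\|v^{[(m+1)/2]}\|^2_{L^2_x}$, and since $1+\gamma\geq 0$ the latter is dominated by $C\|v^{[(m+1)/2]}\|^2_{H^{1+\gamma}_x}$. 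Choosing $\delta_0$ small enough so $C\delta^2$ can be absorbed into the coercive $c_m$-term, I arrive at the pathwise bound
\begin{equ}
\|v(t)\|^2_{H^\gamma_x}+\tfrac{c_m}{2}\int_0^t\|v^{[(m+1)/2]}\|^2_{H^{1+\gamma}_x}ds\leq \|v^{(0)}\|^2_{H^\gamma_x}+Ct+M_t.
\end{equ}
Standard moment inequalities finish (i): raise to the $p/2$-th power, take $\sup_t$, apply Burkholder-Davis-Gundy to $M$, and absorb the resulting bracket via Young's inequality.

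The bound (ii) is essentially a pointwise corollary of (i). By Proposition \ref{prop:Sobolev}(iii,i), the norm $\|v^{[(m+1)/2]}\|^2_{H^{1+\gamma}_x}$ controls the Slobodeckij double integral of $v^{[(m+1)/2]}$ with exponent $2$ and differentiability $1+\gamma$. Pairing this with the elementary inequality
\begin{equ}
|a-b|^{m+1}\leq C_m|a^{[(m+1)/2]}-b^{[(m+1)/2]}|^2\qquad\forall a,b\in\R,
\end{equ}
(checked separately on same-sign and opposite-sign pairs) and with $|a|^{m+1}=|a^{[(m+1)/2]}|^2$ for the $L^{m+1}$-part, the Slobodeckij characterization \eqref{eq:norm} applied at differentiability $\gamma'=2(1+\gamma)/(m+1)$ and exponent $m+1$ yields $\|v\|^{m+1}_{W^{\gamma',m+1}_x}\lesssim\|v^{[(m+1)/2]}\|^2_{H^{1+\gamma}_x}$ pointwise in time. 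Integrating in time and taking $p(m+1)/2$-moments reduces (ii) to (i).

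The main obstacle is the invocation of Stroock-Varopoulos at the fractional order $s=(1+\gamma)/2\in(0,1/2)$ on the spectrally-defined $(-\Delta)$ on $I$ with Dirichlet boundary and applied to the signed nonlinearity $v^{[m]}$; the rest is calibration of constants so that the choice of $\delta_0$ depends only on $\gamma$ and $m$. A minor technical point is justifying Itô's formula rigorously at the $H^\gamma_x$-level, which can be done either by the variational framework against the triple $H^{1+\gamma}_x\subset H^\gamma_x\subset H^{\gamma-1}_x$ or by Galerkin projection onto the span of $\{e^k\}_{k\leq N}$ and passage to the limit $N\to\infty$.
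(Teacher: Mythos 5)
Your proof is essentially the paper's own argument: Itô at the $H^\gamma_x$ level (the paper implements it by testing against each $e^l$ and summing with weights $\lambda_l^\gamma$, one of the two options you mention), Stroock--Varopoulos for coercivity of the porous-medium drift, the elementary $\sum_k\|ue^k\|_{H^\gamma_x}^2\lesssim\|u\|_{L^2_x}^2$ estimate (the paper's Lemma~\ref{lem:krylov lemma}) for the Itô correction and for the quadratic variation of the martingale, followed by BDG/Young/Gronwall and Lenglart for $p<m+1$; part (ii) is exactly the paper's Lemma~\ref{lem:power regularity}, which you re-derive inline. The only things you compress are that a \emph{second} smallness condition on $\delta$ is used in the Young absorption after BDG (not just the one in the coercivity step), and the Gronwall step coming from the $\|v\|_{L^2_tH^\gamma_x}^2$ contribution to $\langle M\rangle$; both are implicit in your ``standard moment inequalities'' remark and do not affect correctness.
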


First we collect some auxiliary statements.
The following lemma is part of \cite[Lem~8.4]{K_Lp} with $\R$ in place of $I$ and $(1-\Delta)$ in place of $\D$.
In our setting the proof is particularly short, so we include it for the sake of completeness.
\begin{lemma}\label{lem:krylov lemma}
For all $\tilde \gamma<-1/2$ there exists a constant $N=N(\tilde \gamma)$ such that for all $u\in L^2_x$ one has
\begin{equ}\label{eq:krylov bound}
\sum_{k\in\N}\|ue^k\|_{H^{\tilde \gamma}_x}^2\leq N \|u\|_{L^2_x}^2.
\end{equ}
Moreover, one has $N(-1)=1/3$.
\end{lemma}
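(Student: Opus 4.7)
The plan is to expand the $H^{\tilde\gamma}_x$ norm in the basis $(e^j)$, swap the two summations in $k$ and $j$, and then exploit Parseval's identity to get rid of the $k$-sum at the cost of replacing $e^k$ by a trivial pointwise bound of $e^j$.

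More concretely, first I would write
\begin{equ}
\sum_{k\in\N}\|ue^k\|_{H^{\tilde\gamma}_x}^2
=\sum_{k\in\N}\sum_{j\in\N}\lambda_j^{\tilde\gamma}|(ue^k,e^j)_{L^2_x}|^2
=\sum_{j\in\N}\lambda_j^{\tilde\gamma}\sum_{k\in\N}|(ue^j,e^k)_{L^2_x}|^2,
\end{equ}
where the last step uses the symmetry $(ue^k,e^j)_{L^2_x}=(ue^j,e^k)_{L^2_x}$ (both equal $\int u e^k e^j\,dx$) together with Fubini. Since $(e^k)_{k\in\N}$ is an orthonormal basis of $L^2_x$, Parseval's identity gives $\sum_{k\in\N}|(ue^j,e^k)_{L^2_x}|^2=\|ue^j\|_{L^2_x}^2$, and then the pointwise bound $|e^j(x)|^2\leq 2$ yields $\|ue^j\|_{L^2_x}^2\leq 2\|u\|_{L^2_x}^2$.

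Putting these pieces together gives
\begin{equ}
\sum_{k\in\N}\|ue^k\|_{H^{\tilde\gamma}_x}^2\leq 2\|u\|_{L^2_x}^2\sum_{j\in\N}\lambda_j^{\tilde\gamma}
=2\pi^{2\tilde\gamma}\|u\|_{L^2_x}^2\sum_{j\in\N}j^{2\tilde\gamma},
\end{equ}
and the last series converges precisely when $\tilde\gamma<-1/2$, which determines the (finite) constant $N(\tilde\gamma)$. For the sharp constant at $\tilde\gamma=-1$, I would use the classical identity $\sum_{j\in\N}j^{-2}=\pi^2/6$, which makes the prefactor collapse to $2\cdot\pi^{-2}\cdot\pi^2/6=1/3$, giving exactly the claimed value $N(-1)=1/3$.

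There is no real obstacle here: the only subtlety is justifying the swap of sums, which is immediate from Tonelli since all terms are non-negative. The argument is genuinely one-dimensional in flavour, relying on the uniform boundedness of the sine eigenfunctions; the same manipulation would fail on higher-dimensional domains where $\|e^j\|_\infty$ blows up, which is consistent with this being where the spatial white noise is tractable only in $d=1$.
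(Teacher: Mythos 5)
Your proof is correct and follows essentially the same route as the paper: expand the $H^{\tilde\gamma}_x$ norm, swap the sums, use the symmetry $(ue^k,e^l)_{L^2_x}=(ue^l,e^k)_{L^2_x}$ to apply Parseval in the $k$-variable, and then bound $\|ue^l\|_{L^2_x}^2\leq 2\|u\|_{L^2_x}^2$ via $\|e^l\|_{L^\infty_x}=\sqrt{2}$. Your explicit check that $2\pi^{-2}\sum_j j^{-2}=1/3$ confirms the stated value $N(-1)=1/3$, which the paper leaves implicit.
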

\begin{proof}
By the definition of the norm in $H^{\tilde \gamma}_x$ and Parseval's identity we get
\begin{equs}
\sum_{k\in\N}\|ue^k\|_{H^{\tilde \gamma}_x}^2= \sum_{k\in\N} \sum_{l\in\N}\lambda_l^{\tilde \gamma}|(u e^k, e^l)_{L^2_x}|^2= \sum_{l\in\N}\lambda_l^{\tilde \gamma}\|u  e^l\|^2_{L^2_x} \leq N \|u\|^2_{L^2_x},
\end{equs}
where we have used that for all $l\in\N$, $\|e^l\|_{L^\infty_x}=\sqrt{2}$ and $\lambda_l= (\pi l)^2$.
\end{proof}


Next is a  bound to deduce the regularity of a function from the regularity of its monotone power.

\begin{lemma}\label{lem:power regularity}
Let $u$ be such that $u^{[\tilde m]}\in H^{\tilde\gamma}_x$ with $\tilde\gamma\in[0,1/2)$ and $\tilde m\in(1,\infty)$.
Then $u\in W^{\tilde \gamma/\tilde m,2\tilde m}_x$ and there exists a constant $N=N(\tilde m)$ such that the following bound holds
\begin{equ}\label{eq:power regularity}
\|u\|_{ W^{\tilde \gamma/\tilde m,2\tilde m}_x}^{2\tilde m}\leq N\|u^{[\tilde m]}\|_{H^{\tilde \gamma}_x}^2.
\end{equ}
\end{lemma}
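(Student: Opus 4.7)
The plan is to reduce \eqref{eq:power regularity} to an elementary pointwise bound combined with the Slobodeckij characterisation of fractional Sobolev spaces from Proposition \ref{prop:Sobolev}. The case $\tilde\gamma = 0$ is immediate since $\|u\|_{L^{2\tilde m}_x}^{2\tilde m} = \|u^{[\tilde m]}\|_{L^2_x}^2 = \|u^{[\tilde m]}\|_{H^0_x}^2$ holds by direct computation (the identity $|u|^{2\tilde m} = |u^{[\tilde m]}|^2$ in every point), so from now on I assume $\tilde\gamma \in (0, 1/2)$.

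The first step is the pointwise inequality
\[
|a - b|^{2\tilde m} \leq C_{\tilde m} \,|a^{[\tilde m]} - b^{[\tilde m]}|^2 \qquad \text{for all } a, b \in \R,
\]
with $C_{\tilde m}$ depending only on $\tilde m$. I would prove this by case analysis on the signs. When $a, b \geq 0$ with $a \geq b$, monotonicity of the derivative of $y \mapsto y^{\tilde m} - (y-b)^{\tilde m}$ on $[b, \infty)$ (using $\tilde m \geq 1$) gives $(a-b)^{\tilde m} \leq a^{\tilde m} - b^{\tilde m}$, and squaring yields the claim. When $a \geq 0 \geq b$, one has $|a - b| = a + |b|$ and $a^{[\tilde m]} - b^{[\tilde m]} = a^{\tilde m} + |b|^{\tilde m}$, and convexity of $t \mapsto t^{\tilde m}$ gives $(a + |b|)^{\tilde m} \leq 2^{\tilde m -1}(a^{\tilde m} + |b|^{\tilde m})$, which again squares to what we need. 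The remaining sign combinations follow by symmetry.

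Next I would invoke Proposition \ref{prop:Sobolev}(i) with $p = 2\tilde m > 1$ and exponent $\tilde\gamma/\tilde m \in (0, 1/2) \subset (0,1)$; the kernel exponent $1 + (\tilde\gamma/\tilde m)\cdot 2\tilde m$ equals $1 + 2\tilde\gamma$. Integrating the pointwise bound over $I \times I$ against $|x-y|^{-(1+2\tilde\gamma)}$, and combining with the identity $\|u\|_{L^{2\tilde m}_x}^{2\tilde m} = \|u^{[\tilde m]}\|_{L^2_x}^2$, gives
\[
\|u\|_{W^{\tilde\gamma/\tilde m, 2\tilde m}_x}^{2\tilde m} \les \|u^{[\tilde m]}\|_{L^2_x}^2 + \iint_{I \times I} \frac{|u^{[\tilde m]}(x) - u^{[\tilde m]}(y)|^2}{|x-y|^{1+2\tilde\gamma}} \, dx \, dy.
\]

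Finally, to recognise the right-hand side as $\|u^{[\tilde m]}\|_{H^{\tilde\gamma}_x}^2$, I would combine Proposition \ref{prop:Sobolev}(ii) (which yields $\dot W^{\tilde\gamma, 2}_x = W^{\tilde\gamma, 2}_x$ since $\tilde\gamma < 1/2 = 1/p$ for $p=2$) with Proposition \ref{prop:Sobolev}(iii) (which yields $\dot W^{\tilde\gamma, 2}_x = H^{\tilde\gamma}_x$), and apply the Slobodeckij norm equivalence (i) once more, now to $u^{[\tilde m]}$ with $p=2$. The main (indeed only) real obstacle is the pointwise inequality between $|a-b|^{2\tilde m}$ and $|a^{[\tilde m]} - b^{[\tilde m]}|^2$; once this is at hand the argument is bookkeeping between the spectral definition of $H^{\tilde\gamma}_x$ and its Slobodeckij description.
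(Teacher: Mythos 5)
Your proof is correct and follows exactly the same route as the paper: the $\tilde\gamma=0$ case by the pointwise identity, then the elementary bound $|a-b|^{2\tilde m}\leq N(\tilde m)|a^{[\tilde m]}-b^{[\tilde m]}|^2$ fed into the Slobodeckij norm of Proposition~\ref{prop:Sobolev}\eqref{p:slobodeckij}, and finally the identifications \eqref{p:dot} and \eqref{p:spectral} to pass from $W^{\tilde\gamma,2}_x$ to $H^{\tilde\gamma}_x$. The only thing you add is an explicit case-by-sign verification of the pointwise inequality, which the paper simply calls ``elementary''; this is a fine supplement and your argument for it (monotonicity of $y\mapsto y^{\tilde m}-(y-b)^{\tilde m}$ when the signs agree, convexity when they differ, and symmetry for the rest) is sound.
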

\begin{proof}
The $\tilde \gamma=0$ case is trivial. For $\tilde\gamma\in(0,1/2)$ we use the
equivalent norm \eqref{eq:norm} and the
 elementary inequality $|a-b|^{2\tilde m}\leq N(\tilde m)|a^{[\tilde m]}-b^{[\tilde m]}|^2$  to write
\begin{equs}
\|u\|_{W^{\tilde\gamma/\tilde m,2\tilde m}_x}^{2\tilde m}
&\leq N\|u\|_{L^{2\tilde m}_x}^{2\tilde m}+  N\int_{I\times I}\frac{|u(x)-u(y)|^{2\tilde m}}{|x-y|^{1+2\tilde m(\tilde\gamma/\tilde m)}}\,dy\,dx
\\
&\leq N\|u^{[\tilde m]}\|_{L^{2}_x}^{2}+ N\int_{I\times I}\frac{|u(x)^{[\tilde m]}-u(y)^{[\tilde m]}|^2}{|x-y|^{1+2\tilde\gamma}}\,dy\,dx
\\&\leq N\|u^{[\tilde m]}\|_{ W^{\tilde \gamma,2}_x}^2
\leq N\|u^{[m]}\|_{H^{\tilde \gamma}_x}^2.
\end{equs}
\end{proof}

The next tool is the Stroock-Varopoulos lemma.
It appears and is proved in various forms in the literature, so for the convenience of the reader we give the proof in the appendix, following the standard proof strategy (see e.g. \cite[Sec~5]{Vazquez_fractional}) based on the Cafarelli-Silvestre extension \cite{C-S}.
An alternative, more elementary strategy can be found in e.g. \cite[App~B2]{Figalli}.
\begin{lemma}[Stroock-Varopoulos]\label{lem:stroock-varopoulos}
Let $\beta \in(0,1/2)$ and let $f,g\in C^1(\R)$ satisfy $f'=(g')^2$. Then, for any $u \in H^1$ 
such that $f(u),g(u) \in H^1$, we have
\begin{equ}\label{eq:stroock-varopoulos}
\int_I  f(u) \D^\beta u \,dx
\geq
\int_I |\D^{\beta/2} g(u)|^2(x)\,dx.
\end{equ}
\end{lemma}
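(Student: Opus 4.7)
The plan is to prove the inequality via the Caffarelli--Silvestre extension, which rewrites the fractional Dirichlet form as a local weighted Dirichlet energy in one higher dimension. Concretely, for $h \in H^\beta_x$ let $H = H(x,y)$ denote its weighted harmonic extension to the half-strip $I \times (0,\infty)$, i.e.\ the solution of
\begin{equation*}
-\mathrm{div}\bigl(y^{1-2\beta}\nabla H\bigr) = 0 \quad \text{in } I\times(0,\infty),\qquad H(x,0)=h(x),\qquad H(0,y)=H(1,y)=0.
\end{equation*}
Two classical facts I will invoke are: (a) the Dirichlet-to-Neumann formula $(-\Delta)^\beta h(x) = -c_\beta \lim_{y\downarrow 0} y^{1-2\beta}\partial_y H(x,y)$ with some explicit $c_\beta>0$; and (b) the energy identity $c_\beta \iint_{I\times(0,\infty)} y^{1-2\beta}|\nabla H|^2\,dx\,dy = \|(-\Delta)^{\beta/2} h\|_{L^2_x}^2$, together with the variational characterization that the right-hand side is the infimum of the weighted Dirichlet energy among \emph{all} extensions of $h$ satisfying the lateral boundary condition.

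Let $U$ be the extension of $u$. First I would multiply the extension equation by $f(U)$, integrate over $I\times(0,\infty)$, and perform an integration by parts. The lateral boundary terms on $\{0,1\}\times(0,\infty)$ vanish because $U=0$ there (and $f$ will effectively be adjusted by the constant $f(0)$, which produces no contribution after testing against $(-\Delta)^\beta u$ since $(-\Delta)^\beta$ annihilates constants in the Dirichlet setting for $\beta<1/2$), and the trace on $\{y=0\}$ recovers $\int_I f(u)(-\Delta)^\beta u\,dx$ via (a). This yields
\begin{equation*}
\int_I f(u)\,(-\Delta)^\beta u\,dx \;=\; c_\beta \iint_{I\times(0,\infty)} y^{1-2\beta}\,\nabla f(U)\cdot \nabla U\,dx\,dy.
\end{equation*}
Now I apply the chain rule $\nabla f(U) = f'(U)\nabla U = (g'(U))^2\nabla U$ (legitimate since $f,g\in C^1$ and $U$ has enough regularity in the interior) to rewrite the right-hand side as $c_\beta \iint y^{1-2\beta}(g'(U))^2|\nabla U|^2 = c_\beta \iint y^{1-2\beta}|\nabla g(U)|^2$.

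The final step is to observe that $g(U)$ is an admissible extension of $g(u)$ to the half-strip, so by the variational principle of (b) its weighted Dirichlet energy dominates that of the weighted harmonic extension of $g(u)$, giving $c_\beta \iint y^{1-2\beta}|\nabla g(U)|^2 \geq \|(-\Delta)^{\beta/2} g(u)\|_{L^2_x}^2$, which is the desired bound.

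The main obstacle I anticipate is not the algebraic core but the rigorous justification at the $H^1$ regularity level with only $C^1$ nonlinearities: namely, ensuring that $f(U)$ is an admissible test function (so the integration by parts and the trace identification are valid), that $g(U)$ has finite weighted energy and the correct trace $g(u)$, and that the chain rule applies almost everywhere on $I\times(0,\infty)$. These are standard but require approximation by smooth functions and truncation of $f,g$. A secondary point is handling any constant $f(0), g(0)$ that would otherwise spoil the zero lateral boundary condition, which is resolved by replacing $f,g$ by $f - f(0)$ and $g-g(0)$, observing that the relation $f'=(g')^2$ is unaffected, and that $\int_I (-\Delta)^\beta u \, dx = 0$ under Dirichlet boundary conditions.
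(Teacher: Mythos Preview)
Your proposal is correct and follows essentially the same route as the paper: both use the Caffarelli--Silvestre extension, with the paper's identity (ii) playing the role of your integration-by-parts step and the trace inequality (i) playing the role of your variational characterization. One minor correction on your ``secondary point'': the claim $\int_I(-\Delta)^\beta u\,dx=0$ is false here (since $\int_I e^k\,dx\neq 0$ for odd $k$), but the issue is moot anyway---the hypothesis $f(u),g(u)\in H^1_x$, where $H^1_x$ is defined spectrally and hence coincides with $H^1_0(I)$, already forces $f(0)=g(0)=0$.
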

The particular form that we use below is
\begin{equ}\label{eq:s-v_power}
\int_I  u^{[m]} \D^\beta u \,dx
\geq \frac{4  m}{( m+1)^2}
\int_I |\D^{\beta/2} u^{[\frac{ m+1}{2}]}|^2(x)\,dx,
\end{equ}
the more general form \eqref{eq:stroock-varopoulos} can be useful for general nonlinear leading operators, for example in the context of Remark \ref{rem:nondegenerate}. We can now prove the a priori bounds \eqref{eq:a priori 1}-\eqref{eq:a priori 2}.
\begin{proof}[Proof of Lemma \ref{lem:a priori}]

We test the equation with $e^l$ and apply It\^o's formula for the square to obtain 
\begin{equs}
|(v(s), e^l)_{L^2_x}|^2& = |(v^{(0)}, e^l)_{L^2_x}|^2-\int_0^s 2 \lambda_l \big( \nu v(r)+ v^{[m]}(r) , e^l\big)_{L^2_x}(v(r), e^l)_{L^2_x} \, dr  
\\
&+ \sum_{k=1}^n\int_0^s  |\big(\sigma(v(r))e^k, e^l\big)_{L^2_x}|^2 \,  dr
\\
& +  \sum_{k=1}^n 2 \int_0^s  \big(\sigma(v(r))e^k, e^l\big)_{L^2_x} (v(r), e^l)_{L^2_x}\, dw^k_r.
\end{equs}
We multiply the above equality with $\lambda_l^\gamma$ and we sum over $l$ to obtain  
\begin{equs}[eq:Ito]
\|v(s)\|_{H^{\gamma}_x}^2&=
\|v^{(0)}\|_{H^{\gamma}_x}^2
-2\nu\int_0^s\big(v(r),\D^{1+\gamma}v(r)\big)_{L^2_x}\,dr
\\
&-2\int_0^s \big(v^{[m]}(s),\D^{1+\gamma}v(r)\big)_{L^2_x}\,dr
+\int_0^s\sum_{k=1}^n \|\sigma(v(r)) e^k\|_{H^\gamma_x}^2\,dr
\\
&+\sum_{k=1}^n 2\int_0^s\big(\D^{\gamma/2} v(r),\D^{\gamma/2}
 (\sigma(v(r))e^k)\big)_{L^2_x}\,dw^k_r.
\end{equs}
Denote the last term by $M_s$. 
The first integral on the right-hand side is nonnegative so we simply bound it by $0$.
For the second one
we apply \eqref{eq:s-v_power} (with $\beta=\gamma$)
and for the third
we use Lemma \ref{lem:krylov lemma} (with $\tilde\gamma=\gamma$).
We therefore get
\begin{equs}
\|v(s)\|_{H^{\gamma}_x}^2
&\leq
\|v^{(0)}\|_{H^{\gamma}_x}^2
-\tfrac{8m}{(m+1)^2}\|v^{[\frac{m+1}{2}]}\|_{L^2([0,s];H^{1+\gamma}_x)}^2+
\bar N(\gamma)\|\sigma(v)\|_{L^2([0,s];L^2_x)}^2+M_s
\\ & \leq
\|v^{(0)}\|_{H^{\gamma}_x}^2
-\tfrac{8m}{(m+1)^2}\|v^{[\frac{m+1}{2}]}\|_{L^2([0,s];H^{1+\gamma}_x)}^2
\\
&  \qquad  \qquad +
\bar N(\gamma)\delta\|v\|_{L^{m+1}([0,s];L^{m+1}_x)}^{m+1}+N+M_s.
\end{equs}
Notice that $\tfrac{8m}{(m+1)^2}>\tfrac{2}{m}$.
Assuming $\delta$ is small enough so that $\bar N(\gamma)\delta\leq \tfrac{1}{m}$, we obtain
\begin{equ}[eq:energy1]
\|v(s)\|_{H^{\gamma}_x}^2 +\tfrac{1}{m}\|v^{[\frac{m+1}{2}]}\|_{L^2([0,s];H^{1+\gamma}_x)}^2
\leq  \|v^{(0)}\|_{H^{\gamma}_x}^2 +N+M_s.
\end{equ}
The quadratic variation process $\langle M\rangle$ of the local martingale $M$ is given by
\begin{equs}[eq:quadratic var]
\langle M\rangle_s
&=4\int_0^s \sum_{k=1}^n\big(\D^{\gamma/2} v(r),\D^{\gamma/2} (\sigma(v(r))e^k)\big)_{L^2_x}^2\,dr
\\
&\leq 4 \int_0^s \|v(r)\|_{H^\gamma_x}^2\sum_{k\in\N}\|\sigma(v(r))e^k\|_{H^\gamma_x}^2\,dr
\\
&\leq
4 \bar  N(\gamma)\int_0^s \|v(r)\|_{H^\gamma_x}^2\|\sigma(v(r))\|_{L^2_x}^2\,dr
\\
&\leq 4 \bar N(\gamma)\delta 
\|v\|_{L^\infty([0,s];H^\gamma_x)}^2\|v\|_{L^{m+1}([0,s];L^{m+1}_x)}^{m+1}
+N \|v\|_{L^2([0,s];H^\gamma_x)}^2
\end{equs}
where we used Lemma \ref{lem:krylov lemma} and the growth of $\sigma$ as before.
In particular, by \eqref{eq:approx bound}, $M$ is a martingale.
Denote $X_s=\|v\|_{L^\infty([0,s],H^\gamma_x)}$ and $Y_s=\|v^{[\frac{m+1}{2}]}\|_{L^2([0,s];H^{1+\gamma}_x)}$.
By \eqref{eq:energy1}, \eqref{eq:quadratic var} and the Burkholder-Gundy-Davis and Jensen inequalities, we have for any stopping time $\tau \leq T$, 
\begin{equs}
\E X_{s\wedge \tau} ^{m+1} +\tfrac{1}{m^{(m+1)/2}}\E Y_{s\wedge \tau}^{m+1}
&\leq N \E\|v^{(0)}\|_{H^\gamma_x}^{m+1} + N + N \int_0^s\E X_{r\wedge \tau}^{m+1}\,dr
\\
&\qquad
+\tilde N(\gamma,m)\delta^{(m+1)/4} \E (X_{s\wedge \tau}^{(m+1)/2} \tfrac{1}{m^{(m+1)/4}}Y_{s\wedge \tau}^{(m+1)/2}).
\end{equs}
By \eqref{eq:approx bound}, we have that $\E X_ {s\wedge \tau} ^{m+1} +\E Y_ {s\wedge \tau} ^{m+1}<\infty$.
Therefore, assuming $\delta$ is small enough so that $\tilde N(p,m)\delta^{(m+1)/4}\leq 1$, we can apply Young's inequality for the last term and absorb it in the left-hand side.
We get
\begin{equ}
\E X_{s\wedge \tau}^{m+1} +\E Y_ {s\wedge \tau}^{m+1}\leq N \E\|v^{(0)}\|_{H^\gamma_x}^{m+1} + N + N\int_0^s\E X_{r\wedge \tau}^{m+1}\,dr.
\end{equ}
Applying Gronwall's inequality for the function $s\mapsto\E X_{s\wedge \tau}^{m+1} +\E Y_{s\wedge \tau}^{m+1}$, yields
\begin{equ}
\E X_\tau ^{m+1}+ \E Y^{m+1}_\tau \leq N\E\|v^{(0)}\|_{H^\gamma_x}^{m+1} + N.
\end{equ}
By choosing $\tau=T$ we have  \eqref{eq:a priori 1} for $p=m+1$. The result for $p<m+1$ follows by Lenglart's inequality (see, e.g., \cite[Proposition IV.4.7 and Exercise IV.4.31/1]{Karatzas} ).  Finally, the bound \eqref{eq:a priori 2} then follows by applying Lemma \ref{lem:power regularity} with $\tilde\gamma=1+\gamma$, $\tilde m=(m+1)/2$.
\end{proof}

\begin{corollary}\label{cor:extra power}
Take $\gamma\in(-1,-1/2)$. Let $\sigma$ and $v^{(0)}$ satisfy Assumption \ref{as:qualitative for approx},
$\sigma$ satisfy Assumption \ref{as:sigma}  with $\delta\leq \delta_0(\gamma,m)$.
Then, there exists a $c=c(\gamma,m)>1$ such that
\begin{equ}\label{eq:extra power}
\|v\|_{L_{\omega,t,x}^{c(m+1)}}\leq N(\|v^{(0)}\|_{L^{m+1}_\omega H^\gamma_x}+1)
\end{equ}
with some $N=N(\gamma,p,m,K,T)$.
\end{corollary}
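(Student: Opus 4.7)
The plan is to combine the two a priori bounds from Lemma \ref{lem:a priori}---$\E\|v\|_{L^\infty_tH^\gamma_x}^{m+1}$ and $\E\|v\|_{L^{m+1}_tW^{\gamma',m+1}_x}^{(m+1)^2/2}$---via spatial interpolation, 1D Sobolev embedding, and Hölder's inequalities in $t$ and $\omega$, with the key tuning $\theta = 1/c$.

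Fix $c = 1 + \eps$ for $\eps > 0$ to be chosen small and put $\theta = 1/c$. Proposition \ref{prop:Sobolev}(v), applied with $(\gamma_0,p_0) = (\gamma,2)$ and $(\gamma_1,p_1) = (\gamma',m+1)$, gives
\begin{equs}
\|v(t)\|_{W^{s_\theta,q_\theta}_x} \leq \|v(t)\|_{H^\gamma_x}^{1-\theta}\|v(t)\|_{W^{\gamma',m+1}_x}^\theta,
\end{equs}
where $s_\theta = (1-\theta)\gamma + \theta\gamma'$ and $1/q_\theta = (1-\theta)/2 + \theta/(m+1)$. At $\theta = 1$ we have $s_\theta q_\theta = 2(1+\gamma) < 1$, and the 1D Sobolev embedding yields $W^{\gamma',m+1}_x \hookrightarrow L^q_x$ with $q = (m+1)/(-1-2\gamma) > m+1$ (using $\gamma \in (-1,-1/2)$). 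By continuity, for $\eps$ small the embedding $W^{s_\theta,q_\theta}_x \hookrightarrow L^{c(m+1)}_x$ remains valid.

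Raising the resulting bound for $\|v(t)\|_{L^{c(m+1)}_x}$ to the power $c(m+1)$ and integrating in $t$, the identity $\theta c = 1$ makes the exponent on the $W^{\gamma',m+1}_x$-factor exactly $m+1$, and pulling out the $L^\infty_t$-norm of the $H^\gamma_x$-factor produces
\begin{equs}
\|v\|_{L^{c(m+1)}_{t,x}}^{c(m+1)} \leq N\|v\|_{L^\infty_tH^\gamma_x}^{(c-1)(m+1)}\|v\|_{L^{m+1}_tW^{\gamma',m+1}_x}^{m+1}.
\end{equs}
Taking expectation and applying Hölder in $\omega$ with conjugate exponents $a = (m+1)/(m-1)$ and $b = (m+1)/2$, the right-hand side becomes a product of moments of $\|v\|_{L^\infty_tH^\gamma_x}$ and $\|v\|_{L^{m+1}_tW^{\gamma',m+1}_x}$ of orders $(c-1)(m+1)^2/(m-1)$ and $(m+1)^2/2$, respectively. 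Provided $c \leq 2m/(m+1)$, the first order is at most $m+1$, so Lemma \ref{lem:a priori} controls both factors by $\|v^{(0)}\|_{L^{m+1}_\omega H^\gamma_x}^{m+1}+1$, and taking the $(c(m+1))$-th root yields the claim.

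The main obstacle is ensuring the three constraints $c(m+1) \leq r_\theta := (1/q_\theta - s_\theta)^{-1}$ (Sobolev), $\theta c = 1$ (time integration), and $c \leq 2m/(m+1)$ (Hölder in $\omega$) are jointly feasible with $c > 1$. All are satisfied near $(c,\theta) = (1,1)$, so $c = c(\gamma,m) > 1$ exists, though this admissible $c$ degenerates to $1$ as $\gamma \downarrow -1$, reflecting the breakdown of the 1D Sobolev embedding at the boundary of the parameter range.
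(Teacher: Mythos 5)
Your proof is correct and takes essentially the same route as the paper: both interpolate (via Proposition \ref{prop:Sobolev}\eqref{p:interpolation}) between the $L^\infty_t H^\gamma_x$ and $L^{m+1}_t W^{\gamma',m+1}_x$ bounds from Lemma \ref{lem:a priori}, gain integrability in $x$ via the 1D Sobolev embedding (using $\gamma' > 0$), and choose the interpolation parameter $\theta$ close to $1$ to make the $t$- and $\omega$-exponents work out. The only difference is organizational: the paper interpolates in the mixed $L^p_\omega L^q_t W^{s,r}_x$ norms in one step with $\theta$ kept implicit, whereas you interpolate pointwise in $x$ and then do the $t$- and $\omega$-bookkeeping explicitly with the normalization $\theta = 1/c$ and a Hölder split in $\omega$ — both are equivalent and correct.
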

\begin{proof}
By the standard interpolation properties of $L^p$ spaces and \eqref{eq:interpolation}
\begin{equs}
\| v\|_{L^{\frac{(m+1)^2}{2}- \eps_1(\theta)}_{\omega} L^{m+1+\eps_2(\theta)}_t W_x^{\gamma'-\eps_3(\theta), m+1-\eps_4(\theta)}}
&\leq  N \|v\|_{L^{m+1}_{\omega} L^\infty_tW^{\gamma,2}_x}^{1-\theta} 
\|v\|_{L^{\frac{(m+1)^2}{2}}_{\omega} L^{m+1}_tW^{\gamma',m+1}_x}^{\theta}
\\
&=
N \|v\|_{L^{m+1}_{\omega} L^\infty_t H^{\gamma}_x}^{1-\theta}
\|v\|_{L^{\frac{(m+1)^2}{2}}_{\omega} L^{m+1}_tW^{\gamma',m+1}_x}^{\theta}
,
\end{equs}
where in the last step we used Proposition \ref{prop:Sobolev} \eqref{p:spectral} and \eqref{p:dual}.
Here $\theta\in (0,1)$ and $\eps_i(\theta)>0$ such that $\eps_i(\theta)\to 0$ as $\theta\to 1$.
Since $\gamma>-1$ implies $\gamma'>0$, we can choose $\theta$ sufficiently close to $1$ such that
\begin{equ}
\frac{(m+1)^2}{2}- \eps_1(\theta)>m+1,\qquad
\gamma'-\eps_3(\theta)-\frac{1}{m+1-\eps_4(\theta)}>-\frac{1}{m+1}.
\end{equ}
By Sobolev's embedding we then see that for some $c>1$,
\begin{equ}
\|v\|_{L_{\omega,t,x}^{c(m+1)}}\leq
N\| v\|_{L^{\frac{(m+1)^2}{2}- \eps_1(\theta)}_{\omega} L^{m+1+\eps_2(\theta)}_t W_x^{\gamma'-\eps_3(\theta), m+1-\eps_4(\theta)}}.
\end{equ}
On the other hand, \eqref{eq:a priori 1}-\eqref{eq:a priori 2} yields
\begin{equ}
\|v\|_{L^{m+1}_{\omega} L^\infty_t H^{\gamma}_x}^{1-\theta}
\|v\|_{L^{\frac{(m+1)^2}{2}}_{\omega} L^{m+1}_tW^{\gamma',m+1}_x}^{\theta}
\leq N(\|v^{(0)}\|_{L^{m+1}_\omega H^\gamma_x}+1)^{(1-\theta)+\theta\tfrac{2}{m+1}},
\end{equ}
and putting the above bounds together we readily get \eqref{eq:extra power}.
\end{proof}


\subsection{Time regularity}

The following is a simple variant of \cite[Lem~2.1]{Flandoli-Gatarek}. While in \cite{Flandoli-Gatarek} only the $q'=q$ case is stated, the form below is easily obtained via Lenglart's inequality as before.
\begin{lemma}\label{lem:time regularity}
Let $H$ be a separable Hilbert space, $q\geq 2$, and $f=(f^k)_{k=1}^\infty$ be a progressively measurable $\ell^2(H)$-valued process such that $f\in L^q(\Omega\times[0,T],\ell^2(H))$.
Then, for all $\alpha<1/2$ and $q'\in[0,q]$ there exists $N=N(\alpha,q,q')$ such that
\begin{equ}
\E\|t\mapsto\sum_{k\in\N}\int_0^tf^k(s)\,dw^k_s\|_{W^{\alpha,q}([0,T],H)}^{q'}
\leq N\E\|f\|_{L^q([0,T],\ell^2(H))}^{q'}.
\end{equ}
\end{lemma}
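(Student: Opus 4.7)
I will first establish the case $q'=q$ via the classical Burkholder-Davis-Gundy argument of Flandoli-Gatarek, then reduce the range $q'\in[0,q)$ to it via Lenglart's inequality, as suggested by the remark preceding the statement.

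Set $I(t):=\sum_{k\in\N}\int_0^tf^k(s)\,dw^k_s$, which is an $H$-valued martingale with quadratic variation $\int_0^t\|f(r)\|_{\ell^2(H)}^2\,dr$. The $H$-valued BDG inequality together with Jensen's inequality in time (applicable since $q\geq 2$) gives, for $0\leq s<t\leq T$,
\begin{equ}
\E\|I(t)-I(s)\|_H^q\leq N(q)(t-s)^{q/2-1}\int_s^t\E\|f(r)\|_{\ell^2(H)}^q\,dr.
\end{equ}
Inserting this bound into the Slobodeckij expression from Proposition \ref{prop:Sobolev} \eqref{p:slobodeckij} for $\|I\|_{W^{\alpha,q}([0,T],H)}^q$ and swapping the $r$-integral with the double $(s,t)$-integral via Fubini, everything reduces to the integrability condition
\begin{equ}
\sup_{r\in[0,T]}\int_0^r\int_r^T|t-s|^{q/2-2-\alpha q}\,dt\,ds<\infty,
\end{equ}
which by scaling is equivalent to integrability of $\rho^{q/2-1-\alpha q}$ near $\rho=0$; this holds precisely when $\alpha<1/2$. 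The $L^q_t$ part of the $W^{\alpha,q}$ norm is handled analogously by BDG and Jensen directly. Together these yield the bound $\E\|I\|_{W^{\alpha,q}([0,T],H)}^q\leq N\,\E\|f\|_{L^q([0,T],\ell^2(H))}^q$.

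For $q'\in[0,q)$, I apply Lenglart's inequality. Define the non-decreasing adapted processes $X_t:=\|I|_{[0,t]}\|_{W^{\alpha,q}([0,t],H)}^q$ and $Y_t:=\|f\|_{L^q([0,t],\ell^2(H))}^q$. For any stopping time $\tau\leq T$, apply the $q'=q$ bound on $[0,T]$ to the stopped integrand $f(s)\one_{s\leq\tau}$, whose stochastic integral equals $t\mapsto I(t\wedge\tau)$; since restricting the Slobodeckij seminorm to $[0,\tau]^2\subset[0,T]^2$ only decreases it, one has $X_\tau\leq\|t\mapsto I(t\wedge\tau)\|_{W^{\alpha,q}([0,T],H)}^q$ pointwise, so $\E X_\tau\leq N\E Y_\tau$ for every such $\tau$. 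Lenglart's inequality (e.g.\ \cite[Prop.~IV.4.7]{Karatzas}) then yields $\E X_T^{q'/q}\leq N_{q,q'}\E Y_T^{q'/q}$, which is exactly the advertised bound after extracting the $q'$-th root.

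The main technical point is the $q'=q$ kernel computation: matching the BDG-Jensen gain $(t-s)^{q/2-1}$ against the Slobodeckij singularity $(t-s)^{-1-\alpha q}$, which is what forces the sharp threshold $\alpha<1/2$. The Lenglart step is then routine once the appropriate pair of adapted non-decreasing processes has been identified and the stopped-integrand representation $I(\cdot\wedge\tau)$ is invoked.
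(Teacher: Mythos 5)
Your proof is correct and fills in exactly what the paper leaves implicit: the paper only cites \cite[Lem~2.1]{Flandoli-Gatarek} for the base case $q'=q$ and remarks that the lower moments follow ``via Lenglart's inequality as before,'' which is precisely the two-step argument you carry out (BDG--Jensen--Fubini kernel estimate for $q'=q$, then domination via the stopped integrand $f\one_{\cdot\leq\tau}$ and Lenglart for $q'\in(0,q)$). One small wording slip: after Lenglart you already have $\E X_T^{q'/q}=\E\|I\|_{W^{\alpha,q}}^{q'}$, so the bound is the advertised one as stated --- there is no ``$q'$-th root'' to extract.
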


\begin{corollary}
Take $\gamma\in(-1,1/2)$.
Let $\sigma$ and $v^{(0)}$ satisfy Assumption \ref{as:qualitative for approx},
$\sigma$ satisfy Assumption \ref{as:sigma} 
with $\delta\leq \delta_0(\gamma,m)$.
Let, furthermore, $\alpha< 1/2$,  $\beta>5/2$, and define the space
\begin{equ}\label{eq:def X}
\cX= W^{1,\frac{m+1}{m}}\big([0,T]; H^{-\beta}(I))+W^{\alpha, 2c}([0,T];H^{-1}(I)\big),
\end{equ}
where $c$ is as in Corollary \ref{cor:extra power}.
 Then, the solution $v$ of \eqref{eq:approx} satisfies the bound
\begin{equs}\label{eq:time-reg} 
\E\|v\|_\cX^{\frac{m+1}{m}}\leq N(\E\|v^{(0)}\|_{H^{\gamma}_x}^{m+1}+1)
\end{equs}
with some $N=N(\alpha,\beta, m,K,T)$.
\end{corollary}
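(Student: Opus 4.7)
The natural approach is to decompose $v=v_1+v_2$, with
\begin{equs}
v_1(t):=v^{(0)}+\int_0^t\partial_x^2\big(\nu v+v^{[m]}\big)(s)\,ds,\qquad v_2(t):=\sum_{k=1}^n\int_0^t\sigma(v(s))e^k\,dw^k_s,
\end{equs}
placing $v_1$ in the first summand and $v_2$ in the second summand of $\cX$.

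For the drift part $v_1$: since $\beta>5/2$ and we work in one spatial dimension, Sobolev embedding gives $H^{\beta-2}_x\hookrightarrow L^\infty_x$, hence by duality $\partial_x^2\colon L^1_x\to H^{-\beta}_x$ is bounded. Combined with $\|v^{[m]}\|_{L^1_x}=\|v\|_{L^m_x}^m$ and Jensen's inequality on the bounded interval $I$, this yields
\begin{equs}
\int_0^T\|\partial_t v_1(s)\|_{H^{-\beta}_x}^{(m+1)/m}\,ds\leq C\big(1+\|v\|_{L^{m+1}_{t,x}}^{m+1}\big),
\end{equs}
and the $L^{(m+1)/m}_tH^{-\beta}_x$ norm of $v_1$ itself is handled similarly via the fundamental theorem of calculus and $\|v^{(0)}\|_{H^{-\beta}_x}\leq C\|v^{(0)}\|_{H^\gamma_x}$. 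To control $\E\|v\|_{L^{m+1}_{t,x}}^{m+1}$, I would use that $1+\gamma>0$ gives the embedding $H^{1+\gamma}_x\hookrightarrow L^2_x$, so that $\int_I|v|^{m+1}\,dx=\|v^{[(m+1)/2]}\|_{L^2_x}^2\leq C\|v^{[(m+1)/2]}\|_{H^{1+\gamma}_x}^2$; combining this with Lemma \ref{lem:a priori} and Jensen (since $2\leq m+1$) gives a bound of the target form.

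For the stochastic part $v_2$: I would apply Lemma \ref{lem:time regularity} with $H=H^{-1}_x$, $q=2c$, $q'=(m+1)/m$, to the integrand $f^k=\sigma(v)e^k$. Lemma \ref{lem:krylov lemma} (with $\tilde\gamma=-1$) and Assumption \ref{as:sigma} give
\begin{equs}
\sum_{k\in\N}\|\sigma(v)e^k\|_{H^{-1}_x}^2\leq \tfrac{1}{3}\|\sigma(v)\|_{L^2_x}^2\leq C\big(1+\|v\|_{L^{m+1}_x}^{m+1}\big),
\end{equs}
so after integrating in time, applying Jensen in $x$, and taking $(m+1)/(2cm)$-th powers one obtains
\begin{equs}
\|f\|_{L^{2c}_t\ell^2(H^{-1}_x)}^{(m+1)/m}\leq C\big(1+\|v\|_{L^{c(m+1)}_{t,x}}^{(m+1)^2/(2m)}\big).
\end{equs}
Taking expectation, using Jensen once more (valid since $(m+1)/(2m)\leq c$), and invoking Corollary \ref{cor:extra power} leads to a bound of the form $C(\|v^{(0)}\|_{L^{m+1}_\omega H^\gamma_x}+1)^{(m+1)^2/(2m)}$, which is in turn dominated by $N(\E\|v^{(0)}\|_{H^\gamma_x}^{m+1}+1)$ because $(m+1)^2/(2m)\leq m+1$ for $m\geq 1$ and $a^\theta\leq 1+a$ for $\theta\in(0,1]$.

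The main technical point is the exponent bookkeeping in the chain of Jensen/H\"older inequalities. The critical input is that Corollary \ref{cor:extra power} provides strictly better integrability than the basic energy estimate (the factor $c>1$), which is precisely what allows the $L^{(m+1)/m}_\omega$ norm produced by Lemma \ref{lem:time regularity} on the stochastic integral to be absorbed without inflating the power of the initial condition beyond the target $\E\|v^{(0)}\|_{H^\gamma_x}^{m+1}$.
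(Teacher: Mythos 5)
Your proposal is correct and follows essentially the same route as the paper: the same decomposition of $v$ into drift and stochastic parts, Lemma \ref{lem:time regularity} with $q=2c$ for the stochastic integral, Lemma \ref{lem:krylov lemma} together with the growth bound on $\sigma$, and the improved integrability of Corollary \ref{cor:extra power} to close the estimate. The only cosmetic differences are your choice $q'=(m+1)/m$ instead of the paper's $q'=2$ in Lemma \ref{lem:time regularity} (both are admissible and lead to the same conclusion after Jensen), and your dual/embedding phrasing of the bound $\|\D u\|_{H^{-\beta}_x}\lesssim\|u\|_{L^1_x}$, which the paper instead carries out directly in the eigenbasis.
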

\begin{proof}
We apply Lemma \ref{lem:time regularity} with $f^k(s)=\sigma(v(s))e^k\one_{k\leq n}$, $H=H^{-1}_x$, $q=2c$, and $q'=2$,
to get that
\begin{equs}[eq:reg-stoch]
\E & \|s\mapsto\sum_{k=1}^n\int_0^s \sigma(v(r)) e^k\,d w^k_r\|_{W^{\alpha, 2c}_t H^{-1}_x}^2
\\
&\leq
N \E\left(\int_0^T\Big(\sum_{k=1}^n\|\sigma(v(s))e^k\|_{H^{-1}_x}^{2}\Big)^{c}\,ds\right)^{1/c}
\\& \leq
N(\E \|v^{[\frac{m+1}{2}]}\|_{L^{2c}_{t}L^2_x}^{2}+ 1)
\\
&
=N(\E\|v\|_{L_t^{c(m+1)}L_x^{m+1}}^{m+1} +1),        
\end{equs}
where we have used Lemma \ref{lem:krylov lemma} and the growth of $\sigma$ in the last inequality.
On the other hand, one easily sees that
\begin{equs}[eq:reg-det]
 \E & \|s\mapsto\int_0^s\Delta\big(\nu v(r)+v^{[m]}(r)\big)\,dr\|_{W^{1,\frac{m+1}{m}}_tH^{-\beta}_x}^{\frac{m+1}{m}}
\\
&\leq N \E 
\int_0^T \| \Delta\big(\nu v(r)+v^{[m]}(r)\big) \|_{H^{-\beta}_x}^{\frac{m+1}{m}} \, dt
\\
&\leq N\E 
\int_0^T \Big(\sum_{k\in\N} \lambda_k^{2-\beta}\big(\nu v(r)+ v^{[m]}(r),e^k\big)^2\Big)^{\frac{m+1}{2m}}\,dt
\\
&\leq N \big(\E\|v^{[\frac{m+1}{2}]}\|_{L^{2}_{t,x}}^{2}+1\big)\left( \sum_{k\in\N}k^{4-2\beta}\right) ^{\frac{m+1}{2m}}.
\end{equs}
Since $\beta>5/2$, the last sum is finite. By \eqref{eq:extra power}, the right-hand-side of both \eqref{eq:reg-stoch} and \eqref{eq:reg-det} are bounded as in \eqref{eq:time-reg}, hence the proof is finished.
\end{proof}

\section{Limiting procedure}\label{sec:limiting}

\begin{proof}[Proof of Theorem \ref{thm:weak-existence}]
Let $\sigma_n : I\times \R \to \R$ be bounded smooth functions with bounded derivatives such that 
 $\sigma_n \to \sigma$ uniformly on compacts  as $n \to \infty$ and  for all $x\in I$, $r \in \R$
\begin{equs}                       \label{eq:bound-sigma-n}
\sup_n |\sigma_n(x,r)| \leq K+ \delta |r|^{(m+1)/2},
\end{equs}
  and let $u^{(0)}_n$ be $\mathcal{F}_0$-measurable random variables with $\E \|u^{(0)}_n\|_{L^{m+1}_x}^{m+1}< \infty$  and 
 \begin{equs}                      \label{eq:convergence-initial-conditions}
 \lim_{n \to \infty} \E \|u^{(0)}_n- u^{(0)}\|^{m+1}_{H^\gamma_x} = 0.
 \end{equs}
 Let $u_n$ be an $L^2$-solution of 
\begin{equs} [eq:approximation-of-main]
d u_n & = \d^2_x ( n^{-1} u_n+u_n^{[m]}) \, dt + \sum_{k=1}^n \sigma_n(x,u_n)e^k \, dw^k
\\
u_n(0)&=u_n^{(0)}.
\end{equs}
Take $\alpha\in(1/(2c),1/2)$, where $c$ is as in Corollary \ref{cor:extra power},   $\beta\in(5/2,3)$, and set $\cX$ as in \eqref{eq:def X}.
Let us set 
$$
\mathcal{Y}= L^{{m+1}}_tW^{\gamma',m+1}_x  \cap W^{\alpha, \frac{m+1}{m}}_t H^{-\beta}_x,
$$
where $\gamma'= 2(1+\gamma)/(m+1)$, as in Lemma \ref{lem:a priori}.
By  \cite[Thms~2.1-2.2]{Flandoli-Gatarek} we have the compact embeddings
\begin{equ}\label{eq:embeddings}
\mathcal{Y} \Subset L^{{m+1}}_{t,x},\qquad \mathcal{X} \Subset C_t H^{-3}_x. 
\end{equ}
Therefore,
$$
\mathcal{Y}\cap \mathcal{X}  \Subset 
L^{{m+1}}_{t,x}\cap C_t H^{-3}_x
=: \mathcal{Z}.
$$
Notice that $\cX\subset W^{\alpha, \frac{m+1}{m}}_t H^{-\beta}_x$. Therefore,
by  \eqref{eq:a priori 2} and \eqref{eq:time-reg} we have the estimate
\begin{equs}\label{eq:a-priori-main}
\E\|u_n\|_{\mathcal{X} \cap \mathcal{Y}}^{\frac{m+1}{m}} \leq N( \E \|u^{(0)}_n\|^{m+1}_{H^\gamma_x}+1)\leq N( \E \|u^{(0)}\|^{m+1}_{H^\gamma_x}+1),
\end{equs}
which in turn implies that the laws of $(u_n)_{n\in\N} $ on $\mathcal{Z}$ are tight. 
Let us set 
$$
w(t)= \sum_{k\in\N}\frac{1}{\sqrt{2^k}}w^k(t)\mathfrak{e}_k,
$$
where $(\mathfrak{e}_k)_{k\in\N}$ is the standard orthonormal basis of $\ell^2$.
By Prokhorov's theorem, there exists a (non-relabelled) subsequence $(u_n)_n$ such that the laws of $
(u_n, w)$ on $\mathcal{Z} \times C([0,T]; \ell^2)$ are weakly convergent. By Skorohod's representation theorem, there exist $\mathcal{Z} \times C([0,T]; \ell^2)$-valued random variables $(\bar u,\bar w) $, $(\bar u_n, \bar w_n )$, for $n \in \N$, on a probability space $(\bar \Omega, \bar {\mathcal{F}}, \bar{\P})$,
such that in $\mathcal{Z} \times C([0,T]; \ell^2)$, $\bar{\P}$-almost surely
\begin{equation}          \label{eq:convergence-in-Z}
(\bar u_n,\bar w_n )\to (\bar u , \bar w),
\end{equation}
as $n  \to \infty$, and for each $n \in \N$, as random variables in $\mathcal{Z}\times C([0,T]; \ell^2)$
\begin{equation}            \label{eq:distribution}
(\bar u_n, \bar  w_n )\overset{d}{=}(u_n,w).
\end{equation}
 Moreover, upon passing to a subsequence, we may assume that 
 \begin{equation}                     \label{eq:almost-everywhere}
 \bar u_n  \to\bar u \  \ \text{for almost all \ $(\bar{\omega}, t,x)$}.
  \end{equation}
Let $(\bar{\mathcal{F}}_t)_{t \in [0,T]}$ be the augmented filtration of
$\mathcal{G}_t:= \sigma( \bar u(s), \bar w (s); s \leq t)$, and let $\bar w ^k(t):= \sqrt{2^k}(\bar w(t), \mathfrak{e}_k)_{\ell^2}$. It is easy to see that $\bar w^k$, $k \in \N$, are independent, standard, real-valued $\bar{\mathcal{F}}_t$-Wiener processes (see for example the argument in \cite[Proof of Prop. 5.5]{DGT19}).

We now show that $\bar u$ is a weak solution. 
Notice that by virtue of the a priori estimates \eqref{eq:a priori 1}-\eqref{eq:a priori 2} and \eqref{eq:distribution} we have  
\begin{equs}                          \label{eq:bounds-bar-un}
\bar \E\|\bar u_n\|_{L^\infty_tH^{\gamma}_x}^2
+\bar \E\|\bar u_n^{[\frac{m+1}{2}]}\|_{L^2_tH^{1+\gamma}_x}^2+
\bar \E\|\bar u_n\|_{L^{m+1}_tW^{\gamma',m+1}_x}^{m+1} \leq N( \E\|u^{(0)}\|_{H^\gamma_x}^2+1),
\end{equs}
which, by the lower semicontinuity of the norms, gives 
\begin{equs}                     \label{eq:bounds-bar-u}
\bar \E\|\bar u\|_{L^\infty_tH^{\gamma}_x}^2
+\bar \E\|\bar u ^{[\frac{m+1}{2}]}\|_{L^2_tH^{1+\gamma}_x}^2+
\bar \E\|\bar u \|_{L^{m+1}_tW^{\gamma',m+1}_x}^{m+1} \leq N( \E\|u^{(0)} \|_{H^\gamma_x}^2+1).
\end{equs}
Let us set 
\begin{equs}
\nonumber
M(\bar u, t) &:= \bar{u}(t)- \bar{u}(0)-\int_0^t  \Delta  \big( \bar u^{[m]}(s) \big)  \, ds,
\end{equs}
and for $v\in \{u_n, \bar u_n\}$,
\begin{equs}
M_n(v, t) &:=  v(t)- v(0)-\int_0^t \Delta \big( n^{-1} \bar v(s) + \bar v^{[m]}(s) \big)  \, ds.
\end{equs}
Fix an arbitrary $l \in \mathbb{N}$. We will show that for any $\phi \in H^{-3}_x$, the processes 
\begin{equs}
 M^1(\bar u,  t)&:= (M(\bar u,  t), \phi)_{H^{-3}_x},
\\ \label{eq:def-M2}
 M^2( \bar u , t)&:= ( M( \bar u , t), \phi)^2_{H^{-3}_x}-\int_0^t \sum_{k\in\N} | \big(\sigma(\bar u(s)) e^k , \phi\big)_{H^{-3}_x}|^2 \, ds,
\\
\bar  M^{3}( \bar u , t)&:=\bar w^l(t)(  M( \bar u , t), \phi)_{H^{-3}_x}- \int_0^t \big(\sigma(\bar u(s))e^l, \phi\big)_{H^{-3}_x} \, ds
\end{equs}
are continuous $\bar {\mathcal{F}}_t$-martingales. We first show that they are continuous $\mathcal{G}_t$-martingales. Assume for now that $\phi \in C^\infty_c(I)$. For, $i=1,2,3$ and $v \in \{ u_n , \bar u_n\}$,  let us also define the processes $M^i_n(v, t)$ similarly to $ M^i( \bar u, t)$, but with $ \bar u, M( \bar u, t), \sigma(\bar u)$ replaced by $v, M_n(v,t), \sigma_n (v) $, and the corresponding summation in \eqref{eq:def-M2} going only up to $n$.    Let us fix $s < t$ and let $V$ be a bounded, continuous function on $C([0,s];H^{-3}_x) \times C([0, s] ; \ell^2)$. We have that 
\begin{equs}
(M_n(u_n, t), \phi)_{H^{-3}_x}&= \sum_{k =1}^n \int_0^t \big(\sigma_n(u_n(s))e^k, \phi\big)_{H^{-3}_x} \, d w^k_s.
\end{equs}
It follows that $M^i_n(u_n, t)$ are continuous $\mathcal{F}_t$-martingales. Hence, for $i=1,2,3$, 
$$
\E V(u_n|_{[0,s]},w|_{[0,s]})(M_n^i(u_n , t)-M_n^i(u_n, s))=0,
$$
which combined with \eqref{eq:distribution} gives 
\begin{equation}                \label{eq:martingale-Ml}
\bar \E V( \bar u_n|_{[0,s]},\bar w_n |_{[0,s]}) (M_n^i(\bar u_n, t)-  M_n^i(\bar u_n, s)) =0.
\end{equation}
Next, notice that $\bar \P$-almost surely
\begin{equs}
  \int_0^T\big| & \big( \Delta  (n^{-1}\bar u_n(t)+ \bar u_n^{[m]}(t)-\bar u^{[m]}(t)) , \phi \big)_{H^{-3}_x} \big| \,   dt 
\\
=&  \int_0^T \Big|\sum_{k\in\N}  \lambda_k^{1-3}\big(n^{-1}\bar u_n(t)+ \bar u_n^{[m]}(t)-\bar u^{[m]}(t), e^k\big)_{L^2_x} (\phi, e^k)_{L^2_x} \, \Big| \, dt  
\\
\leq  & N \|  \phi \|_{L^\infty_x}   (n^{-1}\|\bar u_n \|_{L^1_{t,x}}+  \| \bar u_n^{[m]}-\bar u^{[m]}\|_{L^1_{t,x}})  \to 0,         \label{eq:conPhi}
\end{equs}
where the convergence follows from  \eqref{eq:almost-everywhere} and the bounds \eqref{eq:bounds-bar-un}. 
Hence, by \eqref{eq:conPhi}  and \eqref{eq:convergence-in-Z} we see that for each $t \in [0,T]$, $\bar \P$-almost surely
\begin{equation}         \label{eq:MltoM-in-probability}
( M_n(\bar u_n ,t), \phi)_{H^{-3}_x}\to (M(\bar u , t), \phi)_{H^{-3}_x}.
\end{equation} 
In addition, it is easy to see that 
\begin{equs}
 \bar \E \int_0^T \Big| & \sum_{k=1}^n \big(\sigma_n( \bar u_n(t)) e^k , \phi\big)^2_{H^{-3}_x}- \sum_{k\in\N} \big(\sigma( \bar u(t)) e^k , \phi\big)^2_{H^{-3}_x} \Big| \, dt
 \\
 \leq & N \|\phi\|_{H^{-3}_x}^2 \bar  \E \int_0^T \sum_{k=n+1}^\infty \|\sigma( \bar u (t) ) e^k\|^2_{H^{-3}_x}\, dt
 \\
   &\quad+  N \|\phi\|_{H^{-3}_x}^2 \bar \E \int_0^T \|\sigma( \bar u(t)  )-\sigma_n(\bar u_n(t))\|_{L^2_x}\|\sigma( \bar u (t) )+\sigma_n(\bar u_n(t))\|_{L^2_x}\, dt,                
   \\
    \label{eq:convergence-squares}
\end{equs}
where we have used also Lemma \ref{lem:krylov lemma}. The first term of the right hand side converges to zero as $n \to \infty$  by virtue of Lemma \ref{lem:krylov lemma}, Assumption \ref{as:sigma} and \eqref{eq:bounds-bar-u}.  
For the second term we have 
\begin{equs}
 \Big| \bar \E \int_0^T &\|\sigma( \bar u (t) )-\sigma_n(\bar u_n(t))\|_{L^2_x}\|\sigma( \bar u (t) )+\sigma_n(\bar u_n(t))\|_{L^2_x}\, dt \Big|^2
\\
 \leq &  \bar \E \|\sigma( \bar u  )-\sigma_n(\bar u_n) \|^2_{L^2_{t,x}}  (\bar \E \| \bar u\|^{m+1}_{L^{m+1}_{t,x}}+\bar \E \|\bar u_n\|^{m+1}_{L^{m+1}_{t,x}}+1)
 \\
 \leq & N \bar  \E \|\sigma( \bar u  )-\sigma_n(\bar u_n) \|^2_{L^2_{t,x}} ( \E\|u^{(0)} \|_{H^\gamma_x}^2+1),
\end{equs}
where we have used Assumption \ref{as:sigma}, \eqref{eq:bound-sigma-n} and the bounds \eqref{eq:bounds-bar-un}-\eqref{eq:bounds-bar-u}.
By \eqref{eq:almost-everywhere}, the uniform convergence on compacts of $\sigma_n$ to $\sigma$  and the continuity of $\sigma$ we have that $|\sigma_n(\bar u_n)- \sigma ( \bar u)|^2 \to 0  $ for almost every $(\bar \omega, t, x)$.  Moreover, by Assumption \ref{as:sigma}   and \eqref{eq:bound-sigma-n}, we have 
\begin{equs}
|\sigma_n( \bar u_n ) - \sigma( \bar u ) |^2 \leq N (1+ |\bar u_n|^{m+1}+ |\bar u|^{m+1}).
\end{equs}
Hence, to conclude that the right hand side of \eqref{eq:convergence-squares} converges to zero, it suffices to check that $|\bar u_n|^{m+1}$ are uniformly integrable in $(\bar\omega, t, x)$.
This follows immediately from \eqref{eq:extra power} and \eqref{eq:distribution}.
Using \eqref{eq:MltoM-in-probability} we can conclude that $M^2_n(\bar u_n, t) \to M^2(\bar u, t)$ in probability. 
Similarly one shows  that  $M^3_n(\bar u_n, t) \to M^3(\bar u, t)$. Therefore, for each $t \in [0,T]$ we have that $M^i_n(\bar u_n, t) \to M^i(\bar u, t)$ in probability.
 Moreover, with $c>1$ from Corollary \ref{cor:extra power},  we have 
\begin{equs}
\nonumber
\sup_{n\in\N} \bar{\E}| (M(\bar u_n ,t), \phi)_{H^{-3}_x}|^{2c}&= \sup_{n\in\N} \E \left|  \sum_{k=1}^n  \int_0^t \big(\sigma(u_n(s))e^k , \phi\big)_{H^{-3}_x} \, dw^k_s\right|^{2c}
\\
& \leq N \|\phi\|_{H^{-3}_x}^{2 c} \sup_{n\in\N} ( 1+ \| u_n \|^{c(m+1)}_{L^{c (m+1)}_{ \omega, t, x}})< \infty.
\end{equs}
and 
\begin{equs}
\sup_{n\in\N} \bar \E \left| \int_0^t \sum_{k=1}^n \big( \sigma(\bar u_n(s) ) e^k , \phi\big)_{H^{-3}_x}^2 \, ds\right|^c  \leq N \| \phi\|^{2c}_{H^{-3}_x}\sup_{n\in\N} ( 1+ \|\bar u_n \|^{c(m+1)}_{L^{c (m+1)}_{\bar \omega, t, x}})< \infty,
\end{equs}
from which one deduces that for each $i =1,2,3$ and $t \in [0,T]$, $M^i_n(\bar u_n, t)$ are uniformly integrable in $\bar \omega$. 
 Hence, we can pass to the limit in \eqref{eq:martingale-Ml} to obtain, for $i=1,2,3$, 
\begin{equation}       \label{eq:martingale-property}
\bar{\E}V(\bar u|_{[0,s]}, \bar w|_{[0,s]})( M^i(\bar u, t)- M^i(\bar u, s)) =0.
\end{equation}
In addition, using the continuity of $ M^i(\bar u, t)$ in $\phi$, uniform integrability, and the fact that $C^\infty_c(I)$ is dense in $H^{-3}_x$, it follows that  \eqref{eq:martingale-property} holds also for all $\phi \in H^{-3}_x$. Hence, for all $\phi \in H^{-3}_x(I)$, $i = 1,2,3$, one can see that $\bar {M}^i(\bar u , t)$ are continuous $\mathcal{G}_t$-martingales having finite $c$-moments. In particular, by Doob's maximal inequality, they are uniformly integrable (in $t$), which combined with continuity (in $t$) implies that they are also $\bar{\mathcal{F}}_t$-martingales.  By \cite[Prop. A.1]{HOF2} we obtain that  almost surely, for all $\phi \in H^{-3}_x$,  $t \in [0,T]$
\begin{equs}                                                        
\nonumber
(\bar {u}(t), \phi)_{H^{-3}_x}&= (\bar {u}(0), \phi)_{H^{-3}_x}+\int_0^t \big( \Delta (\bar u ^{[m]}(s)), \phi\big)_{H^{-3}_x} \, ds 
\\
&\quad +\sum_{k\in\N}   \int_0^t \big(\sigma(\bar u(s)) e^k, \phi\big)_{H^{-3}_x} \, d \bar w ^k_s.
\label{eq:solving-the-equation}    
\end{equs}
Notice that by \eqref{eq:convergence-initial-conditions}, \eqref{eq:distribution} and \eqref{eq:convergence-in-Z}, it follows that   $\bar{u}(0)\overset{d}{=} u^{(0)} $ and consequently $\bar u (0) \in L^p(\bar \Omega; H^\gamma_x)$. Also, from \eqref{eq:bounds-bar-u} it follows  that $\bar u \in L^{m+1}( \bar\Omega_T ; L^{m+1}_x)$. 
Choosing $\phi = \D^{2}  \psi$  in \eqref{eq:solving-the-equation} for $\psi \in C^\infty_c(I)$, we obtain that for almost all $(\bar {\omega}, t)$
\begin{equs}
\nonumber
(\bar{u}(t), \psi)_{H^{-1}_x}&= (\bar {u}(0), \psi)_{H^{-1}_x }-\int_0^t  \big( \bar u^{[m]}(s), \psi \big)_{L^{2}_x}\,ds
\\
&\quad + \sum_{k\in\N}\int_0^t \big( \sigma(\bar u(s))e^k ,  \psi\big)_{H^{-1}_x} \, d \bar w^k_s.
\end{equs} 
By \cite[Thm. 3.2]{KR_SEE} we have that $\bar u $ is an $ \bar{\mathbb{F}}$-adapted, continuous $H^{-1}_x$-valued process. This shows that $\{(\bar\Omega,\bar \cF,\bar \P), \bar \F, (\bar w^k)_{k\in\N}, \bar u \}$  is a weak solution.

Concerning the claimed bounds:

(i) Estimate \eqref{eq:estimates-space} is obtained in \eqref{eq:bounds-bar-u}.

(ii) For \eqref{eq:down-from-infinity} we have the following. Notice that due to \eqref{eq:Ito}, the quantity  $ \E \|\bar u_n(t)\|_{H^{\gamma}_x}^2$ is differentiable  in $t$, and similarly  to  the argumentation for \eqref{eq:energy1}, one sees that it satisfies
\begin{equs}
\partial_t  \E \|  u_n(t)\|_{H^{\gamma}_x}^2 \leq -  \tfrac{1}{m} \E\| u_n ^{[\frac{m+1}{2}]}(t)\|_{H^{1+\gamma}_x}^2+N,
\end{equs}
where $N$ depends on $\gamma, m, K$ and $T$. 
By the inequalities 
\begin{equs}
\E\| u_n ^{[\frac{m+1}{2}]}(t)\|_{H^{1+\gamma}_x}^2 \geq  \E\| u_n ^{[\frac{m+1}{2}]}(t)\|_{L^2_x}^2 \geq \left( \E \| u_n(t) \|_{L^2_x}^2  \right)^{(m+1)/2} \geq \left( \E \| u_n(t) \|_{H^{\gamma}_x}^2  \right)^{(m+1)/2} ,
\end{equs}
it follows that $g(t):= \E \| u_n(t)\|_{H^{\gamma}_x}^2$ satisfies for almost all $t$ 
\begin{equs}
\partial_t g(t)+ \frac{1}{m} |g(t)|^{(m+1)/2} \leq N. 
\end{equs}
This implies that (see, e.g., \cite[Lemma 5.1]{G13}) with a constant $N$,  depending only on $\gamma, m, K$ and $T$, we have for all $t \in [0,T]$
$$
 \E \|  u_n(t)\|_{H^{\gamma}_x}^2 \leq N t^{-2/(m-1)}
$$
Inequality \eqref{eq:down-from-infinity} follows from the above, again by \eqref{eq:distribution} and lower semicontinuity  of the norms. 

(iii) As before, it suffices to check the bound for $u_n$.
Since $\cX$ embeds continuously into $C_t^{\eps_0}H^{-3}_x$ for some $\eps_0>0$, 
for $\eps\geq\gamma+3$
the statement follows from
\eqref{eq:a-priori-main}, with $\eps'=\eps_0$.
Otherwise let $\theta\in(0,1)$ be the number defined by $\gamma-\eps=(1-\theta)\gamma-3\theta$. Then by interpolation (see Remark \ref{rem:trivial})
\begin{equs}
\|u_n(t)-u_n(s)\|_{H^{\gamma-\eps}_x}
&\leq \|u_n(t)-u_n(s)\|_{H^{\gamma}_x}^{1-\theta}\|u_n(t)-u_n(s)\|_{H^{-3}_x}^{\theta}
\\
&\leq \|u_n\|_{L^\infty_t H^\gamma_x}^{1-\theta}|t-s|^{\theta\eps_0}\|u_n\|_{C^{\eps_0}_tH^{-3}_x}^\theta.
\end{equs}
The statement therefore follows once again from \eqref{eq:a-priori-main} with $\eps'=\theta\eps_0$,  and \eqref{eq:bounds-bar-un}, \eqref{eq:distribution}.
\end{proof}

\section{Strong well-posedness in $H^{-1}$}\label{sec:strong H1}

\begin{proof}[Proof of Proposition \ref{thm:strong-wellposedness}]
In the following we denote $c_0=1/3$, which is $N(-1)$ from Lemma \ref{lem:krylov lemma}, so we have $\delta<\tfrac{2}{c_0}$ and $\bar\delta\leq\tfrac{8m}{c_0(m+1)^2}$.
We verify the assumptions of \cite{KR_SEE}.
Consider the Gelfand triple $L^{m+1}_x\subset H^{-1}_x\equiv (H^{-1}_x)^*\subset (L^{m+1}_x)^*$. 
The inner product in $H^{-1}_x$ as well as the duality between
$L^{m+1}_x$ and $(L^{m+1}_x)^*$ is denoted by $\scal{\cdot,\cdot}$,
so that the two possible interpretations of
$\scal{f,g}$ with $f\in L^{m+1}_x$ and $g\in H^{-1}_x$ agree. 
The operator $A:u\mapsto \Delta u^{[m]}$ maps $L^{m+1}_x$ to $(L^{m+1}_x)^*$ and
$B=(B^k)_{k\in\N}: u\mapsto (\sigma(\cdot,u)e^k)_{k\in\N}$
maps $L^{m+1}_x$ to $\ell^2(H^{-1}_x)$.
We now recall and verify the assumptions from \cite{KR_SEE} in a somewhat more
restrictive form than therein, which will suffice for our purposes.
It is assumed that there exist $\mu>0$, $M\in \R$, such that for all $v,v_1,v_2\in L^{m+1}_x$ the properties $A_1)-A_5)$ below hold:
\begin{itemize}
\item[$A_1)$]\emph{Semicontinuity of $A$: the function $\scal{v,A(v_1+\lambda v_2)}$ is continuous in $\lambda\in \R$.  }

This is a standard fact for the porous medium operator, see  \cite[Ex. 4.1.11]{Rock}. 

\item[$A_2)$]\emph{ Monotonicity of $(A,B)$: }
\begin{equ}
2\scal{v_1-v_2,Av_1-Av_2}+\sum_{k\in \N}\|B^kv_1-B^kv_2\|_{H^{-1}_x}^2
\leq 0.
\end{equ}
First we use Lemma \ref{lem:krylov lemma} to write
\begin{equ}\label{eq:w-p1}
\sum_{k\in \N}\|B^kv_1-B^kv_2\|_{H^{-1}_x}^2=
\sum_{k\in \N}\|\big(\sigma(v_1)-\sigma(v_2)\big)e^k\|_{H^{-1}_x}^2
\leq
c_0\|\sigma(v_1)-\sigma(v_2)\|_{L^2_x}^2.
\end{equ}
Next, observe the elementary inequality, for $f,g:\R\to\R$ satisfying $f'=(g')^2$
\begin{equ}
(a-b)(f(a)-f(b))\geq |g(a)-g(b)|^2,\qquad a,b\in\R.
\end{equ}
This in particular implies
\begin{equ}\label{eq:w-p2}
(a-b)(a^{[m]}-b^{[m]})\geq 
\frac{4m}{(m+1)^2}|a^{[\frac{m+1}{2}]}-b^{[\frac{m+1}{2}]}|^2,\qquad a,b\in\R.
\end{equ}
By \eqref{eq:w-p1}, Assumption \ref{as:sigma} (b), and
\eqref{eq:w-p2} we therefore have
\begin{equs}
2 & \scal{v_1-v_2,Av_1-Av_2}+\sum_{k\in \N}\|B^kv_1-B^kv_2\|_{H^{-1}_x}^2
\\
&\leq
\int_I -2(v_1-v_2)(v_1^{[m]}-v_2^{[m]})+c_0|\sigma(v_1)-\sigma(v_2)|^2\,dx
\\
&\leq
\int_I -2(v_1-v_2)(v_1^{[m]}-v_2^{[m]})+c_0\bar\delta|v_1^{[\frac{m+1}{2}]}-v_2^{[\frac{m+1}{2}]}|^2\,dx
\\
&\leq 0,
\end{equs}
where in the last step we used $\bar\delta\leq \frac{8m}{c_0(m+1)^2}$.
\item [$A_3)$] \emph{Coercivity of $(A,B)$:}
\begin{equ}
2\scal{v,Av}+\sum_{k\in\N}\|B^kv\|_{H^{-1}_x}^2\leq 
-\mu\|v\|_{L^{m+1}_x}^{m+1}+M.
\end{equ}
Using Lemma \ref{lem:krylov lemma} similarly as above, we can write
\begin{equs}
2 \scal{v,Av}+\sum_{k\in\N}\|B^kv\|_{H^{-1}_x}^2
&\leq \int_I \left(-2|v|^{m+1}+c_0|\sigma(v)|^2\right) \,dx
\\
&\leq
(-2+c_0\delta)\|v\|_{L^{m+1}_x}^{m+1}
+2c_0 K.
\end{equs}
We can therefore set $\mu=2-c_0\delta$, which is positive by assumption.

\item [$A_4)$] \emph{Boundedness of the growth of $A$:}
\begin{equ}
\|A v\|_{(L^{m+1}_x)^*}\leq M \|v\|_{L^{m+1}_x}^m.
\end{equ}

This is also standard, see  \cite[Ex.~4.1.11]{Rock}. 

\item [$A_5)$] $\E\|u^{(0)}\|_{H^{-1}_x}^2<\infty$.
This holds by assumption.

\end{itemize}
Invoking \cite[Thms~2.1-2.2]{KR_SEE}, the proof is complete.
\end{proof}

\section*{Acknowledgements}

BG acknowledges financial support by the DFG through the CRC 1283 "Taming uncertainty and profiting from randomness and low regularity in analysis, stochastics and their applications".
MG thanks the support of the Austrian Science Fund (FWF) through the Lise Meitner programme M2250-N32.

\begin{appendices}

\section[Appendix]{Appendix}

\begin{proof}[Proof of Lemma \ref{lem:stroock-varopoulos}]
We assume that $f'=(g')^2$ is bounded, the assumptions of the lemma guarantee that the general case follows from a standard approximation argument.
Denote by $X^{2 \beta}_0 $ the completion of $C_c^\infty(I\times [0, \infty)) $ under the norm 
$$
\|\phi \|^2_{X^{2 \beta}_0}:= C_{\beta} \int_0^\infty\int_I y^{1-2 \beta} |\nabla \phi(x,y)|^2 \, dx  dy ,
$$
where $C_\beta>0$ is a normalizing constant such that (ii) below holds. 
 For $ \psi \in H^{\beta}$ we denote by $E(\psi) \in X^{2\beta}_0$ the unique solution of 
\begin{equs}
\begin{cases} - \nabla \cdot ( y^{1-2\beta} \nabla w)= 0   &\text{in $ I \times (0, \infty)$},
\\
 w =0       & \text{on $ \partial I \times (0, \infty)$},
\\
w=  \psi & \text{on $I\times \{0\}$}.
\end{cases}
\end{equs}
The following facts are well known (see, \cite{Barrios, Pablo} ): 
\begin{enumerate}[(i)]
\item \label{item:trace-inequality}The map $E :H^\beta \to X^{2 \beta}_0$ is an isometry and for all $ \phi \in X^{2\beta}_0$ we have 
\begin{equs}                 
\| \text{Tr} \phi\|_{H^\beta} \leq \| \phi\|_{X^{2 \beta}_0},
\end{equs}
where $\text{Tr}$ is the closure of the operator $\text{Tr}_0$ defined on $C_c^\infty(I\times [0, \infty)) $ by $(\text{Tr}_0\phi)(x)=\phi(x,0)$.

\item \label{item:same-inner-product} For $u \in H^{\beta}$ we have 
\begin{equs}
C_\beta \int_0^\infty \int_I y ^{1-2\beta} \nabla  E(u) \nabla \phi \, dx dy  = \int_I \D^{\beta/2} u \, \D^{\beta/2} \text{Tr}\phi \, dx,
\end{equs}
for all $\phi \in X^{2 \beta}_0$. 
\end{enumerate}
Let $u$  be as in the statement of Lemma \ref{lem:stroock-varopoulos}. Since  $ f (u)= \text{Tr} \, f(E(u))$, we get by applying \eqref{item:same-inner-product} with $\phi = f(E(u))$
\begin{equs}
\int_I f(u) \D^\beta u\,dx &= \int_I\left( \D^{\beta/2}\text{Tr} \, f(E(u))   \right) \left(  \D^{\beta/2} u  \right) \,dx
\\
&=C_\beta \int_0^\infty \int_I y ^{1-2\beta} \nabla f ( E(u)) \nabla E (u) \, dx dy 
\\
& =C_\beta \int_0^\infty \int_I y ^{1-2\beta} f'(E(u))|\nabla  E(u)|^2  \, dx dy.
\label{eq:first-part-varopoulos}
\end{equs}
Using $f'=(g')^2$ and \eqref{item:trace-inequality} we get 
\begin{equs} 
C_\beta \int_0^\infty \int_I y ^{1-2\beta} f'(E(u))|\nabla  E(u)|^2  \, dx dy
& = C_\beta \int_0^\infty \int_I y ^{1-2\beta} |\nabla  g( E(u))|^2  \, dx dy
\\
& \geq \| \text{Tr}\, g(E(u))\|_{H^\beta}^2.  \label{eq:second-part-varopoulos}
\end{equs}
By \eqref{eq:first-part-varopoulos}, \eqref{eq:second-part-varopoulos}, and the equality $\text{Tr}\, g(E(u)) = g(u)$, we get the claim.
\end{proof}

\end{appendices}

\bibliography{PME_spacetime}{}

\begin{thebibliography}{dPQRV11}
\expandafter\ifx\csname url\endcsname\relax
  \def\url#1{\texttt{#1}}\fi
\expandafter\ifx\csname urlprefix\endcsname\relax\def\urlprefix{URL }\fi
\expandafter\ifx\csname href\endcsname\relax
  \def\href#1#2{#2}\fi
\expandafter\ifx\csname burlalt\endcsname\relax
  \def\burlalt#1#2{\href{#2}{\texttt{#1}}}\fi

\bibitem[BCdPS12]{Barrios}
\textsc{B.~Barrios}, \textsc{E.~Colorado}, \textsc{A.~de~Pablo}, and
  \textsc{U.~Sánchez}.
\newblock On some critical problems for the fractional laplacian operator.
\newblock \emph{Journal of Differential Equations} \textbf{252}, no.~11,
  (2012), 6133 -- 6162.
\newblock
  \burlalt{doi:https://doi.org/10.1016/j.jde.2012.02.023}{http://dx.doi.org/https://doi.org/10.1016/j.jde.2012.02.023}.

\bibitem[BCdPS13]{Pablo}
\textsc{C.~Br\"{a}ndle}, \textsc{E.~Colorado}, \textsc{A.~de~Pablo}, and
  \textsc{U.~S\'{a}nchez}.
\newblock A concave-convex elliptic problem involving the fractional
  {L}aplacian.
\newblock \emph{Proc. Roy. Soc. Edinburgh Sect. A} \textbf{143}, no.~1, (2013),
  39--71.
\newblock
  \burlalt{doi:10.1017/S0308210511000175}{http://dx.doi.org/10.1017/S0308210511000175}.

\bibitem[BdPR16]{BDPR}
\textsc{V.~Barbu}, \textsc{G.~da~Prato}, and \textsc{M.~Röckner}.
\newblock \emph{Stochastic Porous Media Equations}.
\newblock Springer International Publishing, 2016.
\newblock
  \burlalt{doi:10.1007/978-3-319-41069-2}{http://dx.doi.org/10.1007/978-3-319-41069-2}.

\bibitem[BFRO17]{Figalli}
\textsc{M.~Bonforte}, \textsc{A.~Figalli}, and \textsc{X.~Ros-Oton}.
\newblock Infinite speed of propagation and regularity of solutions to the
  fractional porous medium equation in general domains.
\newblock \emph{Communications on Pure and Applied Mathematics} \textbf{70},
  no.~8, (2017), 1472--1508.
\newblock \burlalt{doi:10.1002/cpa.21673}{http://dx.doi.org/10.1002/cpa.21673}.

\bibitem[BM19]{bailleul2019}
\textsc{I.~Bailleul} and \textsc{A.~Mouzard}.
\newblock Paracontrolled calculus for quasilinear singular pdes (2019).
\newblock \burlalt{arXiv:1912.09073}{http://arxiv.org/abs/1912.09073}.

\bibitem[BV15]{Bonforte-Vazquez}
\textsc{M.~Bonforte} and \textsc{J.~L. V{\'{a}}zquez}.
\newblock A priori estimates for fractional nonlinear degenerate diffusion
  equations on bounded domains.
\newblock \emph{Archive for Rational Mechanics and Analysis} \textbf{218},
  no.~1, (2015), 317--362.
\newblock
  \burlalt{doi:10.1007/s00205-015-0861-2}{http://dx.doi.org/10.1007/s00205-015-0861-2}.

\bibitem[BVW15]{BVW15}
\textsc{C.~Bauzet}, \textsc{G.~Vallet}, and \textsc{P.~Wittbold}.
\newblock A degenerate parabolic-hyperbolic {{Cauchy}} problem with a
  stochastic force.
\newblock \emph{J. Hyperbolic Differ. Equ.} \textbf{12}, no.~3, (2015),
  501--533.

\bibitem[CS07]{C-S}
\textsc{L.~Caffarelli} and \textsc{L.~Silvestre}.
\newblock {An Extension Problem Related to the Fractional Laplacian}.
\newblock \emph{Communications in Partial Differential Equations} \textbf{32},
  no.~8, (2007), 1245--1260.
\newblock
  \burlalt{doi:10.1080/03605300600987306}{http://dx.doi.org/10.1080/03605300600987306}.

\bibitem[DGG19]{DGG19}
\textsc{K.~Dareiotis}, \textsc{M.~Gerencs{\'e}r}, and \textsc{B.~Gess}.
\newblock Entropy solutions for stochastic porous media equations.
\newblock \emph{Journal of Differential Equations} \textbf{266}, no.~6, (2019),
  3732--3763.
\newblock \burlalt{arXiv:1803.06953}{http://arxiv.org/abs/1803.06953}.
\newblock
  \burlalt{doi:10.1016/j.jde.2018.09.012}{http://dx.doi.org/10.1016/j.jde.2018.09.012}.

\bibitem[DGT19]{DGT19}
\textsc{K.~Dareiotis}, \textsc{B.~Gess}, and \textsc{P.~Tsatsoulis}.
\newblock Ergodicity for {{Stochastic Porous Media Equations}}.
\newblock \emph{arXiv:1907.04605 [math]} (2019).
\newblock \burlalt{arXiv:1907.04605}{http://arxiv.org/abs/1907.04605}.

\bibitem[DHV16]{Debussche}
\textsc{A.~Debussche}, \textsc{M.~Hofmanov\'{a}}, and \textsc{J.~Vovelle}.
\newblock Degenerate parabolic stochastic partial differential equations:
  quasilinear case.
\newblock \emph{Ann. Probab.} \textbf{44}, no.~3, (2016), 1916--1955.
\newblock
  \burlalt{doi:10.1214/15-AOP1013}{http://dx.doi.org/10.1214/15-AOP1013}.

\bibitem[dPQRV11]{Vazquez_fractional}
\textsc{A.~de~Pablo}, \textsc{F.~Quir{\'o}s}, \textsc{A.~Rodr{\'i}guez}, and
  \textsc{J.~L. V{\'a}zquez}.
\newblock A fractional porous medium equation.
\newblock \emph{Advances in Mathematics} \textbf{226}, no.~2, (2011), 1378 --
  1409.
\newblock
  \burlalt{doi:https://doi.org/10.1016/j.aim.2010.07.017}{http://dx.doi.org/https://doi.org/10.1016/j.aim.2010.07.017}.

\bibitem[FG95]{Flandoli-Gatarek}
\textsc{F.~Flandoli} and \textsc{D.~Gatarek}.
\newblock {Martingale and stationary solutions for stochastic Navier-Stokes
  equations}.
\newblock \emph{Probability Theory and Related Fields} \textbf{102}, no.~3,
  (1995), 367--391.
\newblock
  \burlalt{doi:10.1007/BF01192467}{http://dx.doi.org/10.1007/BF01192467}.

\bibitem[FG18]{FG18}
\textsc{B.~Fehrman} and \textsc{B.~Gess}.
\newblock Path-by-path well-posedness of nonlinear diffusion equations with
  multiplicative noise.
\newblock \emph{arXiv preprint arXiv:1807.04230} (2018).

\bibitem[Ger20]{GERENCSER2020}
\textsc{M.~Gerencsér}.
\newblock Nondivergence form quasilinear heat equations driven by space-time
  white noise.
\newblock \emph{Annales de l'Institut Henri Poincaré C, Analyse non linéaire}
  (2020).
\newblock
  \burlalt{doi:https://doi.org/10.1016/j.anihpc.2020.01.003}{http://dx.doi.org/https://doi.org/10.1016/j.anihpc.2020.01.003}.

\bibitem[Ges13]{G13}
\textsc{B.~Gess}.
\newblock Random attractors for degenerate stochastic partial differential
  equations.
\newblock \emph{J. Dynam. Differential Equations} \textbf{25}, no.~1, (2013),
  121--157.
\newblock
  \burlalt{doi:10.1007/s10884-013-9294-5}{http://dx.doi.org/10.1007/s10884-013-9294-5}.

\bibitem[GH18]{GH18}
\textsc{B.~Gess} and \textsc{M.~Hofmanov{\'a}}.
\newblock Well-posedness and regularity for quasilinear degenerate
  parabolic-hyperbolic {{SPDE}}.
\newblock \emph{The Annals of Probability} \textbf{46}, no.~5, (2018),
  2495--2544.
\newblock
  \burlalt{doi:10.1214/17-AOP1231}{http://dx.doi.org/10.1214/17-AOP1231}.

\bibitem[GH19]{GerencserHairer}
\textsc{M.~Gerencs\'{e}r} and \textsc{M.~Hairer}.
\newblock A solution theory for quasilinear singular {SPDE}s.
\newblock \emph{Comm. Pure Appl. Math.} \textbf{72}, no.~9, (2019), 1983--2005.
\newblock \burlalt{doi:10.1002/cpa.21816}{http://dx.doi.org/10.1002/cpa.21816}.

\bibitem[GIP15]{GIP}
\textsc{M.~Gubinelli}, \textsc{P.~Imkeller}, and \textsc{N.~Perkowski}.
\newblock {Paracontrolled distributions and singular PDEs}.
\newblock \emph{Forum of Mathematics, Pi} \textbf{3}, (2015), e6.
\newblock
  \burlalt{doi:10.1017/fmp.2015.2}{http://dx.doi.org/10.1017/fmp.2015.2}.

\bibitem[GRZ09]{RocknerGoldys}
\textsc{B.~Goldys}, \textsc{M.~R\"{o}ckner}, and \textsc{X.~Zhang}.
\newblock Martingale solutions and {M}arkov selections for stochastic partial
  differential equations.
\newblock \emph{Stochastic Process. Appl.} \textbf{119}, no.~5, (2009),
  1725--1764.
\newblock
  \burlalt{doi:10.1016/j.spa.2008.08.009}{http://dx.doi.org/10.1016/j.spa.2008.08.009}.

\bibitem[GS17]{GS16-2}
\textsc{B.~Gess} and \textsc{P.~E. Souganidis}.
\newblock Stochastic non-isotropic degenerate parabolic--hyperbolic equations.
\newblock \emph{Stochastic Process. Appl.} \textbf{127}, no.~9, (2017),
  2961--3004.

\bibitem[Hai14]{H0}
\textsc{M.~Hairer}.
\newblock A theory of regularity structures.
\newblock \emph{Inventiones mathematicae} \textbf{198}, no.~2, (2014),
  269--504.
\newblock \burlalt{arXiv:1303.5113}{http://arxiv.org/abs/1303.5113}.
\newblock
  \burlalt{doi:10.1007/s00222-014-0505-4}{http://dx.doi.org/10.1007/s00222-014-0505-4}.

\bibitem[Hof13]{HOF2}
\textsc{M.~Hofmanov\'{a}}.
\newblock Degenerate parabolic stochastic partial differential equations.
\newblock \emph{Stochastic Process. Appl.} \textbf{123}, no.~12, (2013),
  4294--4336.
\newblock
  \burlalt{doi:10.1016/j.spa.2013.06.015}{http://dx.doi.org/10.1016/j.spa.2013.06.015}.

\bibitem[KR81]{KR_SEE}
\textsc{N.~V. Krylov} and \textsc{B.~L. Rozovskii}.
\newblock Stochastic evolution equations.
\newblock \emph{Journal of Soviet Mathematics} \textbf{16}, no.~4, (1981),
  1233--1277.
\newblock
  \burlalt{doi:10.1007/BF01084893}{http://dx.doi.org/10.1007/BF01084893}.

\bibitem[Kry99]{K_Lp}
\textsc{N.~V. Krylov}.
\newblock An analytic approach to {SPDE}s.
\newblock In \emph{Mathematical Surveys and Monographs},  185--242. American
  Mathematical Society ({AMS}), 1999.
\newblock
  \burlalt{doi:10.1090/surv/064/05}{http://dx.doi.org/10.1090/surv/064/05}.

\bibitem[KS91]{Karatzas}
\textsc{I.~Karatzas} and \textsc{S.~E. Shreve}.
\newblock \emph{Brownian motion and stochastic calculus}, vol. 113 of
  \emph{Graduate Texts in Mathematics}.
\newblock Springer-Verlag, New York, second ed., 1991.
\newblock
  \burlalt{doi:10.1007/978-1-4612-0949-2}{http://dx.doi.org/10.1007/978-1-4612-0949-2}.

\bibitem[LM01]{LMG01}
\textsc{P.-L. Lions} and \textsc{S.~{Mas-Gallic}}.
\newblock Une m{\'e}thode particulaire d{\'e}terministe pour des {\'e}quations
  diffusives non lin{\'e}aires.
\newblock \emph{Comptes Rendus de l'Acad{\'e}mie des Sciences. S{\'e}rie I.
  Math{\'e}matique} \textbf{332}, no.~4, (2001), 369--376.
\newblock
  \burlalt{doi:10.1016/S0764-4442(00)01795-X}{http://dx.doi.org/10.1016/S0764-4442(00)01795-X}.

\bibitem[LR15]{Liu}
\textsc{W.~Liu} and \textsc{M.~R\"{o}ckner}.
\newblock \emph{Stochastic partial differential equations: an introduction}.
\newblock Universitext. Springer, Cham, 2015.
\newblock
  \burlalt{doi:10.1007/978-3-319-22354-4}{http://dx.doi.org/10.1007/978-3-319-22354-4}.

\bibitem[MMP14]{Mueller-M-P}
\textsc{C.~Mueller}, \textsc{L.~Mytnik}, and \textsc{E.~Perkins}.
\newblock Nonuniqueness for a parabolic spde with $\frac{3}{4}-\varepsilon
  $-hölder diffusion coefficients.
\newblock \emph{Ann. Probab.} \textbf{42}, no.~5, (2014), 2032--2112.
\newblock \burlalt{doi:10.1214/13-AOP870}{http://dx.doi.org/10.1214/13-AOP870}.

\bibitem[MR93]{Meleard}
\textsc{S.~M\'{e}l\'{e}ard} and \textsc{S.~Roelly}.
\newblock Interacting measure branching processes. {S}ome bounds for the
  support.
\newblock \emph{Stochastics Stochastics Rep.} \textbf{44}, no. 1-2, (1993),
  103--121.
\newblock
  \burlalt{doi:10.1080/17442509308833843}{http://dx.doi.org/10.1080/17442509308833843}.

\bibitem[OW19]{OttoWeber}
\textsc{F.~Otto} and \textsc{H.~Weber}.
\newblock Quasilinear {SPDE}s via rough paths.
\newblock \emph{Arch. Ration. Mech. Anal.} \textbf{232}, no.~2, (2019),
  873--950.
\newblock
  \burlalt{doi:10.1007/s00205-018-01335-8}{http://dx.doi.org/10.1007/s00205-018-01335-8}.

\bibitem[Par75]{P75}
\textsc{{\'E}.~Pardoux}.
\newblock Equations aux d{\'e}riv{\'e}es partielles stochastiques non
  lin{\'e}aires monotones.
\newblock \emph{PhD thesis} (1975).

\bibitem[PR07]{Rock}
\textsc{C.~Pr\'{e}v\^{o}t} and \textsc{M.~R\"{o}ckner}.
\newblock \emph{A concise course on stochastic partial differential equations},
  vol. 1905 of \emph{Lecture Notes in Mathematics}.
\newblock Springer, Berlin, 2007.

\bibitem[Tri78]{Triebel}
\textsc{H.~Triebel}.
\newblock \emph{Interpolation theory, function spaces, differential operators},
  vol.~18 of \emph{North-Holland Mathematical Library}.
\newblock North-Holland Publishing Co., Amsterdam-New York, 1978.

\end{thebibliography}
\bibliographystyle{Martin}

\end{document}